\journal{Journal of Mathematical Analysis and Applications}
\DeclarePairedDelimiter\floor{\lfloor}{\rfloor}
\newtheorem{theorem}{Theorem}
\newtheorem{assumption}{Assumption}
\def \td  {{\underline{t}}}
\def \hX  {{\widehat{X}}}
\def \hY  {{\widehat{Y}}}
\def \bX  {{\overline{X}}}
\def \EE  {{\mathbb{E}}}
\def \PP  {{\mathbb{P}}}
\def \QQ  {{\mathbb{Q}}}
\def \RR  {{\mathbb{R}}}
\def \Rr  {{\mathrm{R}}}
\def \D   {{\rm d}}
\def \e   {{\rm e}}  % Roman e for exponential
\begin{document}

\begin{frontmatter}

\title{Multilevel Monte Carlo Method for Ergodic SDEs without Contractivity}
%\tnotetext[mytitlenote]{Fully documented templates are available in the elsarticle package on \href{http://www.ctan.org/tex-archive/macros/latex/contrib/elsarticle}{CTAN}.}
\author[mysecondaryaddress]{Wei Fang\corref{mycorrespondingauthor}}
\cortext[mycorrespondingauthor]{Corresponding author}
\ead{wei.fang@maths.ox.ac.uk}
%% Group authors per affiliation:
\author[mysecondaryaddress]{Michael B. Giles}
\ead{mike.giles@maths.ox.ac.uk}
%\fntext[myfootnote]{Since 1880.}

%% or include affiliations in footnotes:
%\author[mymainaddress,mysecondaryaddress]{Elsevier Inc}
%\ead[url]{www.elsevier.com}

\address[mysecondaryaddress]{Mathematical Institute, University of Oxford}

\begin{abstract}
This paper proposes a new multilevel Monte Carlo (MLMC) method for the ergodic SDEs which do not satisfy the contractivity condition. By introducing the change of measure technique, we simulate the path with contractivity and add the Radon-Nikodym derivative to the estimator. We can show the strong error of the path is uniformly bounded with respect to $T.$ Moreover, the variance of the new level estimators increase linearly in $T,$ which is a great reduction compared with the exponential increase in standard MLMC. Then the total computational cost is reduced to $O(\varepsilon^{-2}|\log \varepsilon|^{2})$ from $O(\varepsilon^{-3}|\log \varepsilon|)$ of the standard Monte Carlo method. Numerical experiments support our analysis.
\end{abstract}

\begin{keyword}
SDE,
Euler-Maruyama,
strong convergence,
change of measure,
invariant measure,
MLMC
\end{keyword}

\end{frontmatter}

\linenumbers

\section{Introduction}
In this paper we consider an $m$-dimensional stochastic 
differential equation (SDE) driven by an $m$-dimensional 
Brownian motion:
\begin{equation}
\D X_t = f(X_t)\,\D t + \,\D W_t,
\label{SDE}
\end{equation}
which has a Lipschitz drift $f: \RR^m\!\rightarrow\!\RR^m$ satisfying the dissipativity condition: for some $\alpha,\beta>0,$
\begin{equation}
\langle x,f(x)\rangle \leq -\alpha \|x\|^2+\beta.
\label{eq:dissipative}  
\end{equation}
Theorem 6.1 in \cite{MT93b} shows that this class of SDEs is ergodic and solutions converge exponentially to some invariant measure $\pi$. Evaluating the expectation of some function $\varphi(x)$ with respect to that invariant measure $\pi$ is of great interest in mathematical biology, physics and Bayesian inference in statistics:
$$\pi(\varphi)\coloneqq\int\varphi(x)\,\mathrm{d}\pi(x)=\lim_{t\rightarrow \infty}\mathbb{E}\left[\varphi(X_t) \right],\ \ \varphi\in L^1(\pi). $$
Different approaches to computing the expectation include numerical solution of the Fokker-Planck equation, see \cite{So94} and the references therein, and estimation of the time average of the ergodic numerical solutions, see \cite{MT93b,Ha03,Le07,MSH02,MST10,MT13,
RT96,Ta90}.   

One simple way is to use one of the existing numerical methods for finite time SDEs to simulate the SDE for a sufficiently long time $T,$ see \cite{Part1,HMS02,HJ15,kp92,MT07} and references therein. The  exponential convergence to the invariant measure \cite{MT93b} is given by
\begin{equation}
\left|\EE\left[\varphi(X_T)-\pi(\varphi)\right]\right|\leq \mu^*\,\e^{-\lambda^*T}
\label{ConvergenceRateInvariant}
\end{equation}
for some constant $\mu^*,\lambda^*>0,$ and bounding this truncation error by $\varepsilon$ requires 
\begin{equation}
\label{eq:log T}
T\,\geq\, \frac{1}{\lambda^*} \log(\varepsilon^{-1})+\frac{\log \mu^*}{\lambda^*},
\end{equation}
which means the computational cost of each path using uniform time step $h=O(\varepsilon)$ becomes $O(\varepsilon^{-1}|\log \varepsilon|)$ for numerical schemes with first order weak convergence.
Theorem 1 in \cite{Part2} shows that under the dissipativity condition (\ref{eq:dissipative}) the $p$-th moments of the numerical solution are bounded uniformly with respect to $T,$ so the variance of the estimator is bounded by a constant $V_0$ which does not depend on $T.$ Therefore, the computational cost to achieve $\varepsilon^2$ mean square error (MSE) is $O(\varepsilon^{-3}|\log \varepsilon|).$

The multilevel Monte Carlo (MLMC) method, introduced by Giles \cite{giles08,giles15}, can be applied to reduce the computational cost. If the SDEs further satisfy the contractivity condition: for all $x,y\in\RR^m$,
\begin{equation}
\langle x\!-\!y,f(x)\!-\!f(y)\rangle\,\leq\, -\lambda\,\|x\!-\!y\|^2,
\label{eq:contraction_Lipschitz}
\end{equation}
for some $\lambda>0,$ Theorem 3 in \cite{Part2} has proved first order strong convergence and that the strong error is uniformly bounded with respect to $T.$ Hence, the variance of the multilevel correction $V_\ell$ on each level $\ell$ is bounded by $Ch_\ell^2$ with $C>0$ not depending on $T.$ The MLMC computational cost to achieve $\varepsilon^2$ MSE becomes $O(\varepsilon^{-2}|\log \varepsilon|),$ where the additional $O(|\log\varepsilon|)$ comes from the length of simulation time $T.$ In \cite{Part2}, by simulating different time intervals $T_\ell$ across different levels $\ell$, we further reduce the computational cost to $O(\varepsilon^{-2}).$ 

However, a larger class of SDEs satisfying the dissipativity condition (\ref{eq:dissipative}) does not satisfy the contractivity condition and instead only satisfies the one-sided Lipschitz condition:
\begin{equation}
\langle x\!-\!y,f(x)\!-\!f(y)\rangle\,\leq\, \lambda\,\|x\!-\!y\|^2,
\label{eq:one_sided_Lipschitz}
\end{equation}
for some $\lambda>0.$ The major benefit of the contractivity is that two solutions to the SDE starting from different initial data but driven by the same Brownian motion, will converge exponentially, which means the discretization error from previous time steps will decay exponentially, and then we can prove a uniform bound for the strong error. Without the contractivity, the strong error may increase exponentially with respect to $T.$ Then multilevel correction variances $V_\ell$ also increase exponentially, which, as shown in Theorem \ref{thm: MLMC}, increases the total computational cost to $ O(\varepsilon^{-2-\frac{\kappa}{2\lambda^*}} |\log\varepsilon|),$ where $\kappa$ is the Lyapunov exponent of the system. For some SDEs with a chaotic property, the Lyapunov exponent $\kappa$ can be sufficiently large such that $\frac{\kappa}{2\lambda^*}\geq 1$ and MLMC loses its advantage over the standard Monte Carlo method.

%A relatively large $T$ will result in a bad coupling such that $V_1>V_0.$ Therefore, it is reasonable to start with level 0 with a sufficiently small time step $2^{-\ell_0}$ such that
%\begin{equation*}
%V_1\,=\,2^{-2(\ell_0+1)}\, 2^{\kappa T}\,V_0< V_0,
%\end{equation*} 
%which requires 
%\begin{equation}
%\label{std L0T}
%\ell_0\, \geq\, \frac{\kappa}{2}\,T -1.
%\end{equation}
%Combined with (\ref{eq:log T}), the computational cost on level $0$ will become 
%\begin{equation}
%C_0 \, = \, 2^{\ell_0}T\, \geq \, 2^{\frac{\kappa}{2}\,T -1}\,T\, = \, O(\varepsilon^{-\frac{\kappa}{2\lambda^*}} |\log\varepsilon|)  
%\end{equation}
%and by (\ref{eq:MLMC_cost}), the optimal computational cost to achieve $\varepsilon^2$ MSE will be $O(\varepsilon^{-2-\frac{\kappa}{2\lambda^*}} |\log\varepsilon|).$ For some SDEs with a chaotic property, the Lyapunov exponent $\kappa$ can be large such that $\frac{\kappa}{2\lambda^*}>1$ and the cost of MLMC will be worse than the standard Monte Carlo method. 

In this paper, a change of measure technique is employed to deal with SDEs satisfying the one-sided Lipschitz condition (\ref{eq:one_sided_Lipschitz}). We provide the numerical analysis only for the case of a globally Lipschitz drift but this scheme works well for SDEs with non-globally Lipschitz drift such as the stochastic Lorenz equation which is only locally one-sided Lipschitz. 

The key feature of this class of SDEs, especially the chaotic SDEs, is that the behaviour of solutions is highly sensitive to initial conditions and the difference between the fine path and coarse path will increase exponentially. An intuitive way to avoid this kind of divergence is by adding a "spring" between the fine path and coarse path to draw them closer to each other. 

Mathematically, instead of simulating the fine path and coarse path of the original SDEs, that is, in their separated path spaces with different measures:
\begin{eqnarray*}
\label{Eq: SDE before change}
\mathbb{Q}^f:\ \ \ \D X_t^f &=& f(X_t^f)\,\D t + \D W_t^{\mathbb{Q}^f},\nonumber\\
\mathbb{Q}^c:\ \ \ \,\D X_t^c &=& f(X_t^c)\,\D t + \D W_t^{\mathbb{Q}^c}.
\end{eqnarray*}
We add a spring term with spring coefficient $S>\frac{\lambda}{2}$ for both fine path and coarse path for all $\ell>1,$ and simulate the fine path and coarse path in the same probability measure $\mathbb{P}$:
\begin{eqnarray}
\label{Eq: SDE for change}
\D Y_t^f &=& S(Y_t^c-Y_t^f)\,\D t + f(Y_t^f)\,\D t + \D W_t^\PP,\nonumber\\
\D Y_t^c &=& S(Y_t^f-Y_t^c)\,\D t + f(Y_t^c)\,\D t + \D W_t^\PP.
\end{eqnarray}
The Girsanov theorem gives
\begin{equation}
\EE^{\mathbb{Q}^f}[X_t^f]-\EE^{\mathbb{Q}^c}[X_t^c]=
\EE^{\mathbb{P}}\left[Y_t^f\frac{\D \mathbb{Q}^f}{\D \mathbb{P}}-Y_t^c\frac{\D \mathbb{Q}^c}{\D \mathbb{P}}\right],
\end{equation}
where $\frac{\D \mathbb{Q}^f}{\D \mathbb{P}}$ and $\frac{\D \mathbb{Q}^c}{\D \mathbb{P}}$ are the corresponding Radon-Nikodym derivatives of the measure $\mathbb{Q}^f$ on the fine path space and measure $\mathbb{Q}^c$ on the coarse path space with respect to the $\mathbb{P}$ measure in which we are simulating both paths. In practice, we will derive the Radon-Nikodym derivative exactly for the numerical solution instead of numerically approximating the derivatives above. In the new MLMC scheme, essentially, the fine path $Y_t^f$ and coarse path $Y_t^c$ share the same driving Brownian motion $W_t$ in measure $\mathbb{P}.$ Correspondingly, the Brownian motions for the original SDEs, 
\begin{eqnarray*}
\D W_t^{\mathbb{Q}^f} &=& S(Y_t^c-Y_t^f)\,\D t+ \D W_t^\PP,\\
\D W_t^{\mathbb{Q}^c} &=& S(Y_t^f-Y_t^c)\,\D t + \D W_t^\PP,
\end{eqnarray*}
are slightly different in measure $\mathbb{P}$, which is different from the standard MLMC. 
The benefit of this change is that the difference between the new simulated SDEs satisfies
\begin{equation}
\label{adaptive S}
\D (Y_t^f- Y_t^c)= 2S(Y_t^c-Y_t^f)\,\D t + (f(Y_t^f)- f(Y_t^c))\,\D t,
\end{equation}
and provided $S>\frac{\lambda}{2}$, Ito's formula and the one-sided Lipschitz condition (\ref{eq:one_sided_Lipschitz}) give:
\begin{equation*}
\D\, \|Y_t^f- Y_t^c\|^2 \leq 2(\lambda-2S)\|Y_t^f- Y_t^c\|^2\, \D t,
\end{equation*}  
which will recover the contractivity between the fine and coarse paths. Note that the choice of the simple form of the spring term is motivated
by this intuitive explanation and makes it easy to prove that
contractivity is recovered.  It also works well in practice, but we do
not claim it is optimal and further research is required to investigate
and analyse possible improvements.  Due to the contractivity, we can
prove that the strong difference between the coarse and fine paths is
uniformly bounded with respect to T.  More importantly, we can show that, together with the Radon-Nikodym derivatives, the variance of the new MLMC correction estimator increases only linearly in $T,$ which is a great improvement compared with the exponential increase without the change of measure. 
%Then it is reasonable to start with level 0 with time step $2^{-\ell_0}$ such that
%\begin{equation*}
%V_1\,=\,2^{-2(\ell_0+1)}\, \nu T\,V_0< V_0,
%\end{equation*} 
%which requires 
%\begin{equation}
%\ell_0\, \geq\, \frac{1}{2}\,\log_2 T +\frac{\log_2 \nu}{2}-1.
%\label{new L0T}
%\end{equation}
%Combined with (\ref{eq:log T}), the computational cost on level $0$ becomes 
%\begin{equation}
%C_0 \, = \, 2^{\ell_0}T\, \geq \, 2^{\frac{1}{2}\,\log_2 T +\frac{\log_2 \nu}{2}-1}\,T\, = \, O(T^{3/2})\,=\,O( |\log\varepsilon|^{3/2}).  
%\end{equation}  
%and by (\ref{eq:MLMC_cost}), 
The total computational cost can be reduced to $O(\varepsilon^{-2} |\log\varepsilon|^{2}),$ where the order is independent of the convergence rate $\lambda^*$ of the original SDE and the Lyapunov exponent $\kappa.$

Change of measure techniques have been used in previous research to reduce the variance of corrections in MLMC. Giles proposed to use the same Gaussian samples for the final step of both fine and coarse paths with a change of measure for the pricing of the digital option on page 38 in \cite{giles15}. To cope with the SDEs with path-dependent jumps, Xia \& Giles \cite{Xia} used a change of measure so that the acceptance probability of the jumps is the same for both fine and coarse paths. Kebaier \& Lelong \cite{KL17} optimize over a class of measures to optimally reduce the variance of MLMC corrections. Andersson \& Kohatsu-Higa \cite{AK17} change the sampling distribution to make the MLMC correction variance finite for unbiased simulation of SDEs using parametrix expansions. Stilger \& Poon \cite{SP14} apply it for MLMC calculation of an interest rate model and Gasparotto \cite{GA15} for deep out-of-money options to reduce the variance.  

The change of measure technique together with the Lamperti transform is also the core part in the exact simulation of SDEs, see \cite{Exact} and the references therein. Importance sampling (change of measure technique) has also been widely used in rare event simulations, see \cite{DE01,JS06} for a good introduction and review and the references therein. 

Lastly, the construction of good coupling between paths is also useful for theoretical results. Eberle et al \cite{EGZ17} proposed a new coupling method to estimate the theoretical convergence rate for Langevin dynamics. See \cite{CL89} and its references for further exploration.

The rest of the paper is organised as follows.  Section 2 introduces the new MLMC method with the change of measure. Section 3 states the main theorems, and the relevant numerical experiments are provided in section 4. Numerical results for SDEs with non-globally Lipschitz drift are given in section 5. The proofs of the main theorems are deferred to section 6, and finally, section 7 has some conclusions and discusses future extensions.

In this paper we consider the infinite time interval $[0,\infty)$ and let 
$(\Omega,\mathcal{F},\PP)$ be a probability space with 
normal filtration $(\mathcal{F}_t)_{t\in[0,\infty)}$ corresponding to
a $m$-dimensional standard Brownian motion 
$W_t^\PP=(W^{(1)},W^{(2)},\ldots,W^{(m)})_t.$
We denote the vector norm by 
$\|v\|\triangleq(|v_1|^2+|v_2|^2+\ldots+|v_m|^2)^{\frac{1}{2}}$, 
the inner product of vectors $v$ and $w$ by 
$\langle v,w \rangle\triangleq v_1w_1+v_2w_2+\ldots+v_mw_m$, 
for any $v,w\in\RR^m$ and the Frobenius matrix norm by 
$\|A\|\triangleq \sqrt{\sum_{i,j}A_{i,j}^2}$ 
for any $A\in\RR^{m\times d}.$

\section{New MLMC with change of measure}
In this paper, we use the standard Euler-Maruyama method to simulate the original SDE (\ref{SDE}) using uniform timestep $h>0$ under measure $\PP$:
\begin{equation}
\label{EM step}
t_{n+1}=t_{n}+h,\ \ \hX_{t_{n+1}} = \hX_{t_n}+ f(\hX_{t_n})h + \Delta W_n^{\PP}, 
\end{equation}
where $\Delta W_n^{\PP} \triangleq W_{t_{n+1}}^\PP-W_{t_n}^\PP$ for $n=0,1,...,N-1$ with $N=T/h$ and there is fixed initial data $t_0=0,\ \hX_{t_0}=x_0.$ We use the notation 
$
\td \triangleq \max\{t_n: t_n\!\leq\! t\},\
n_t\triangleq\max\{n: t_n\!\leq\! t\}
$
for the nearest time point before time $t$, and its index.
We define the piece-wise constant interpolant process $\bX_t=\hX_\td$ 
and also define the standard continuous interpolant \cite{kp92} as
\[
\hX_t=\hX_\td+f(\hX_\td) (t\!-\!\td) + (W_t\!-\!W_\td).
\]
Then, the standard Monte Carlo estimator for $\EE^{\PP}\left[ \varphi(X_T) \right]$ is the mean of the values $\varphi(\hX_T^L)$, from $N_L$ independent path simulations using $h=2^{-L}h_0$ for some suitable constant $h_0>0$ and positive integer $L.$
\begin{equation}
\label{Std estimator}
\widehat{\varphi}_{std} \coloneqq N_L^{-1} \sum_{n=1}^{N_L} \varphi(\hX_T^{L,(n)}).
\end{equation}
Next, we quickly review the standard MLMC scheme introduced in \cite{giles08,giles15}. Instead of directly estimating $\EE^{\PP}\left[\varphi(\hX_T^L)\right],$ we have the following telescoping sum in the same probability measure $\mathbb{P}$:
\[\EE^{\PP}\left[\varphi(\hX_T^L)\right] = \EE^{\PP}\left[\varphi(\hX_T^0)\right] + \sum_{\ell=1}^L \EE^{\PP}\left[\varphi(\hX_T^{f,\ell}) -\varphi(\hX_T^{c,\ell-1})\right], \]
where $\hX_T^{f,\ell}$ and $\hX_T^{c,\ell-1}$ share the same driving Brownian motion. Then, the standard MLMC estimator becomes
\begin{equation}
\label{MLMC estimator}
\widehat{\varphi}_{mlmc} \coloneqq N_0^{-1} \sum_{n=1}^{N_0} \varphi(\hX_T^{0,(n)}) + \sum_{\ell=1}^L N_\ell^{-1} \sum_{n=1}^{N_\ell}\left(  \varphi(\hX_T^{f,\ell,(n)})- \varphi(\hX_T^{c,\ell-1,(n)}) \right).
\end{equation}
Now we introduce the new MLMC scheme with change of measure using spring coefficient $S>0.$

For level 0, the numerical estimator is the same as the standard MLMC $\varphi(\hX_T^0)$.   

For level $\ell>1,$ we simulate the SDE with the additional spring terms using timestep $h =  2^{-\ell}\,h_0$ for the fine path and $2h$ for the coarse path.
\begin{itemize}
\item
At $t_0,$ we set $\hY_{t_0}^f = \hY_{t_0}^c = x_0$.
\item
At odd timesteps $t_{2n+1}=t_{2n}+h$ for $n\geq 0,$ we update both paths:
\begin{eqnarray*}
 \hY_{t_{2n+1}}^c &=&\hY_{t_{2n}}^c+ S(\hY_{t_{2n}}^f-\hY_{t_{2n}}^c)h + f(\hY_{t_{2n}}^c) h + \Delta W_{2n}^\PP, \\
\hY_{t_{2n+1}}^f &=&\hY_{t_{2n}}^f+ S(\hY_{t_{2n}}^c-\hY_{t_{2n}}^f)h + f(\hY_{t_{2n}}^f) h + \Delta W_{2n}^\PP.
\end{eqnarray*} 
\item
At even timesteps $t_{2n+2}=t_{2n+1}+h$ for $n\geq 0,$ we update the spring term and drift term of the fine path, but keep both the same for the coarse path:
\begin{eqnarray*}
\hY_{t_{2n+2}}^c &=&\hY_{t_{2n+1}}^c+\ \  S(\hY_{t_{2n}}^f-\hY_{t_{2n}}^c)h\ \ \ + f(\hY_{t_{2n}}^c) h\ \ \, +\Delta W_{2n+1}^\PP,\\
 \hY_{t_{2n+2}}^f &=&\hY_{t_{2n+1}}^f+ S(\hY_{t_{2n+1}}^c-\hY_{t_{2n+1}}^f)h + f(\hY_{t_{2n+1}}^f) h +\Delta W_{2n+1}^\PP.
\end{eqnarray*} 
\end{itemize}
Note that the coarse path updates can be combined to give
\[
\hY_{t_{2n+2}}^c\ =\ \hY_{t_{2n}}^c+  S(\hY_{t_{2n}}^f-\hY_{t_{2n}}^c)2h + f(\hY_{t_{2n}}^c) 2h +\Delta W_{2n}^\PP+\Delta W_{2n+1}^\PP.
\]
%We also need to calculate the exact Radon-Nikodym derivatives $\Rr^f = \frac{\D \widehat{\mathbb{Q}}^f}{\D \mathbb{P}}$ for the fine path and $\Rr^c = \frac{\D \widehat{\mathbb{Q}}^c}{\D \mathbb{P}}$ for coarse path. We first derive the Radon-Nikodym derivatives for one step to illustrate the ideas.

Next, we derive the exact Radon-Nikodym derivatives for both fine and coarse paths. To begin with, suppose we only apply the change of measure to the $n$th timestep. Under measure $\PP,$ we have
\[\hY_{t_{n+1}} = \hX_{t_n} + \widehat{S}h + f(\hX_{t_n})h + \Delta W_n^\PP\ \Rightarrow\ \hY_{t_{n+1}}\sim N^\PP(\hX_{t_n} + f(\hX_{t_n})h + \widehat{S}h, hI),\]
where  $I$ is the identity matrix, $N^\PP(\mu,\Sigma)$ is a Normal distribution under measure $\PP$ and $\widehat{S}$ is the spring term. Under a new measure $\widehat{\mathbb{Q}}_n$ with $\Delta W_n^{\widehat{\QQ}_n} = \widehat{S}h +  \Delta W_n^\PP,$ we get
\[\hY_{t_{n+1}} = \hY_{t_n} + f(\hY_{t_n})h + \Delta W_n^{\widehat{\QQ}_n}\ \Rightarrow\ \hY_{t_{n+1}}\sim N^{\widehat{\QQ}_n}(\hY_{t_n} + f(\hY_{t_n})h, hI).\]
Then the exact Radon-Nikodym derivative for this single step is
\[\frac{\D \widehat{\mathbb{Q}}_n}{\D\mathbb{P}} = \frac{\rho (\hY_{t_{n+1}}\,|\hY_{t_n} + f(\hY_{t_n})h, hI )}{\rho(\hY_{t_{n+1}}\,| \hY_{t_n} + f(\hY_{t_n})h + \widehat{S}h, hI )} \coloneqq \mathrm{R}(\,\hY_{t_{n+1}},\hY_{t_n}, \widehat{S},h),\]
where $\rho(x|\mu,\Sigma)$ is the probability density function of $N(\mu,\Sigma)$ and
\begin{eqnarray*}
\mathrm{R}(\,\hY_{t_{n+1}},\hY_{t_n}, \widehat{S},h) &=& \exp\left(-\left\langle \hY_{t_{n+1}}-\hY_{t_n}-f(\hY_{t_n})h,\widehat{S}\right\rangle+\|\widehat{S}\|^2h/2 \right)\\
&=& \exp \left(-\left\langle  \Delta W_n^\PP ,\ \widehat{S} \right\rangle - \|\widehat{S}\|^2h/2 \right).
\end{eqnarray*}
%Under measure $\widehat{\QQ},$ the distribution of $\hY_{t_{n}}$ is the same as $\hX_{t_{n}}$ under measure $\PP$ for any $0\leq n\leq N.$ Then we have
%\[
%\EE^\PP\left[ \varphi(\hX_{T}) \right] = \EE^{\widehat{\QQ}}\left[\varphi(\hY_T)\right] = \EE^\PP \left[ \varphi(\hY_T) \frac{\D \widehat{\mathbb{Q}}}{\D\mathbb{P}} \right]
%\]
Now, suppose that we introduce such changes on each timestep of the
whole path, so under a new measure $\widehat{\QQ},$ we have $\Delta W_n^{\widehat{\QQ}} = \widehat{S}h +  \Delta W_n^\PP,$ for $n=0,1,..., N-1.$ Since $\Delta W_n^{\PP}$ and $\Delta W_n^{\widehat{\QQ}},$ $n=0,1,..., N-1,$ are sets of independent Brownian
increments under measure $\PP$ and $\widehat{\QQ}$ respectively, the exact Radon-Nikodym derivative becomes
\[\frac{\D \widehat{\mathbb{Q}}}{\D\mathbb{P}}  \coloneqq \prod_{n=0}^{N-1}\mathrm{R}(\,\hY_{t_{n+1}},\hY_{t_n}, \widehat{S},h).\]
Numerically we obtain two new measures $\widehat{\QQ}^f$ and $\widehat{\QQ}^c$ with $\Delta W_n^{\widehat{\QQ}^f} = \widehat{S}^f_nh +  \Delta W_n^\PP$ and $\Delta W_n^{\widehat{\QQ}^c} = \widehat{S}^c_nh +  \Delta W_n^\PP$ respectively for all steps on fine and coarse paths, where  $\widehat{S}^f_n$ and $\widehat{S}^c_n$ are the spring terms on $n$th step for fine and coarse paths. Then we can calculate the exact Radon-Nikodym derivatives step by step at the same time as updating the paths.
\begin{itemize}
\item At $t_0,$ we set $\Rr_{t_0}^f = \Rr_{t_0}^c = 1$.
\item At odd timesteps $t_{2n+1}=t_{2n}+h$ for $n\geq 0,$ we only update $\Rr^f$:
\begin{equation*}
\Rr_{t_{2n+1}}^f\ \ =\ \ \Rr_{t_{2n}}^f\  \Rr\left(\,\hY_{t_{2n+1}}^f,\ \hY_{t_{2n}}^f,\ S(\hY_{t_{2n}}^c-\hY_{t_{2n}}^f),\ h\right).
\end{equation*}
\item At even timesteps $t_{2n+2}=t_{2n+1}+h$ for $n\geq 0,$ we update both $\Rr^f$ and $\Rr^c$:
\end{itemize}
\begin{eqnarray*}
 \Rr_{t_{2n+2}}^f &\ =\  &  \Rr_{t_{2n+1}}^f  \Rr\left(\, \hY_{t_{2n+2}}^f ,\hY_{t_{2n+1}}^f,S(\hY_{t_{2n+1}}^c-\hY_{t_{2n+1}}^f),h  \right),\\
\Rr_{t_{2n+2}}^c &\  =\  &  \Rr_{t_{2n}}^c\ \ \,\Rr \left(\, \hY_{t_{2n+2}}^c ,\ \hY_{t_{2n}}^c,\ S(\hY_{t_{2n}}^f-\hY_{t_{2n}}^c),\ 2h\right).
\end{eqnarray*}
Then, after $N$ steps, we obtain the exact Radon-Nikodym derivatives for the whole path:
%\begin{eqnarray*}
%\Rr^f_T \!\!&\!\!=\!\!& \!\!\exp\left[ -S\sum_{n=0}^{N-1} (\hY_{t_{n}}^c - \hY_{t_n}^f)\Delta W_n - \frac{1}{2}S^2 \sum_{n=0}^{N-1}\|\hY_{t_{n}}^c - \hY_{t_n}^f\|^2 h \right],
%\\
%\Rr^c_T \!\!&\!\!=\!\!&\!\! \exp\left[\! -S\!\!\sum_{n=0}^{N/2-1}\!\! (\hY_{t_{2n}}^f\! -\! \hY_{t_{2n}}^c)(\Delta W_{2n}\!+\!\Delta W_{2n+1})- \frac{1}{2}S^2 \sum_{n=0}^{N/2-1}\|\hY_{t_{2n}}^f - \hY_{t_{2n}}^c\|^2 2h\right],
%\end{eqnarray*}
%where $N=T/h.$
\begin{eqnarray}
\label{eq: RD}
\frac{\D \widehat{\mathbb{Q}}^f}{\D \mathbb{P}} &=&\Rr^f_T = \prod_{n=0}^{N-1} \ \ \, \Rr\left(\,\hY_{t_{n+1}}^f,\ \hY_{t_{n}}^f,\  S(\hY_{t_{n}}^c-\hY_{t_{n}}^f),\  h\right),\nonumber\\
\frac{\D \widehat{\mathbb{Q}}^c}{\D \mathbb{P}}  &=& \Rr^c_T = \prod_{n=0}^{N/2-1}  \ \Rr \left(\, \hY_{t_{2n+2}}^c ,\ \hY_{t_{2n}}^c,\ S(\hY_{t_{2n}}^f-\hY_{t_{2n}}^c),\ 2h\right).
\end{eqnarray}
Finally, the multilevel correction estimator becomes 
\begin{equation}
\label{eq: level estimator}
 \varphi(\hY_T^f)\,\Rr^f_T-\varphi(\hY_T^c)\,\Rr^c_T,
\end{equation}
and the identity we use in the new MLMC is 
\[
\EE^{\PP}\left[\varphi(\hX_T^L)\right] = \EE^{\PP}\left[\varphi(\hX_T^0)\right] + \sum_{\ell=1}^L \EE^{\PP}\left[\varphi(\hY_T^{f,\ell})\Rr^{f,\ell}_T -\varphi(\hY_T^{c,\ell-1}\Rr^{c,\ell}_T)\right]
\]
where $\Rr^{f,\ell}_T$ and $\Rr^{c,\ell}_T$ are the exact Radon-Nickodym derivatives for the fine and coarse paths on level $\ell.$ The new MLMC estimator becomes
\begin{eqnarray}
\label{new MLMC estimator}
\widehat{\varphi}_{new} &\coloneqq & N_0^{-1} \sum_{n=1}^{N_0} \varphi(\hX_T^{0,(n)})  \\
&&+ \sum_{\ell=1}^L N_\ell^{-1} \sum_{n=1}^{N_\ell}\left(  \varphi(\hX_T^{f,\ell,(n)})\Rr^{f,\ell,(n)}_T- \varphi(\hX_T^{c,\ell-1,(n)}) \Rr^{c,\ell,(n)}_T\right). \nonumber
\end{eqnarray}

In the following sections, we only work under measure $\PP,$ so we use $W_t$ to denote $W_t^\PP$ for simplicity.
%For the detailed implementation, see Algorithm \ref{AlgoEuler}.

%\begin{algorithm}[p]
%\SetAlgoLined
%   $\hX^c=x_0;\ \hX^f=x_0$\;
%   $M = 2^{\ell};\ h=T/M$\;
%   $\Delta W_1= 0;\ \Delta W_2=0$\;
%   $Cmf = 0;\ Cmc=0$\;
%    \vspace{1em}

% \For{m = 1: M/2}{
% $Springf = S*(\hX^c-\hX^f)$\;
% $Springc = - Springf$\;
% $Driftc = drift(\hX^c)$\;
% \vspace{1em}
% $\Delta W_1 = N(0,h)$\;
% $Driftf = drift(\hX^f)$\;
% $\hX^f = \hX^f + Driftf*h + Springf*h + \Delta W_1$\;
% $Cmf = Cmf - Springf*\Delta W_1 -0.5*Springf^2*h$\;
% \vspace{1em}
% $\hX^c = \hX^c + Driftc*h + Springc*h + \Delta W_1$\;
% $Springf = S*(\hX^c-\hX^f)$\;
% \vspace{1em}
% $\Delta W_2 = N(0,h)$\;
% $Driftf = drift(\hX^f)$\;
% $\hX^f = \hX^f + Driftf*h + Springf*h + \Delta W_2$\;
% $Cmf = Cmf - Springf*\Delta W_2 -0.5*Springf^2*h$\;
% \vspace{1em}
% $\hX^c = \hX^c + Driftc*h + Springc*h + \Delta W_2$\;
% $Cmc = Cmc - Springc*(\Delta W_1 + \Delta W_2) - 0.5*Sprinc^2*2h$\;
 
% }

%\vspace{1em}
% \KwResult{$\varphi(\hX^f)\exp(Cmf)-\varphi(\hX^c)\exp(Cmc)$}
% \caption{Outline of the algorithm for a single standard MLMC sample using Euler-Maruyama method with change of measure for scalar SDE on level $\ell$ in time interval $[0,T].$}
%\label{AlgoEuler}
%\end{algorithm}

\section{Theoretical Results}
In this section, we state the key results on the stability and strong error of the path after the change of measure, and then the variance of the estimator (\ref{eq: level estimator}) and the resulting MLMC complexity.
\begin{assumption}[Lipschitz and dissipativity]
\label{assp:linear_growth}
Assume $f$ is globally Lipschitz so that there is a constant $K>0$ such that
\begin{equation}
\| f(x)\!-\!f(y) \| \leq K\, \|x\!-\!y\| ,
\label{Lipschitz condition}
\end{equation}
for all $x,y\in \RR^m$.
Furthermore, there exist constants $\tilde{\alpha}, \tilde{\beta} > 0$ 
such that for all $x\in \RR^m$, $f$ satisfies the dissipativity condition:
\begin{equation}
\langle x,f(x)\rangle \leq -\tilde{\alpha} \|x\|^2+ \tilde{\beta}.
\label{eq:dissipativity}
\end{equation}
\end{assumption}
Note that a consequence of the Lipschitz condition is that 
\[\|f(x)\|\leq \|f(0)\|+K\|x\|\ \Rightarrow\ \|f(x)\|^2\leq 2\left(\|f(0)\|^2+K^2\|x\|^2\right).\]
This assumption ensures the existence and uniqueness of the strong solution to the SDEs \cite{MX07} and the convergence to the invariant distribution \cite{MSH02}. Note that the Lipschitz assumption is needed for simplicity of the proof but numerical experiments in section \ref{Non-Lipschitz section} show that the change of measure technique also works well for SDEs with non-globally Lipschitz drift. 
The following theorem, based on this assumption, shows that our numerical scheme with sufficiently small $h$ is stable and the moments of the numerical solution is uniformly bounded with respect to $T.$

\begin{theorem}[Stability]
\label{Theorem: stability of path}
If the original SDE satisfies Assumption  \ref{assp:linear_growth}, then using 
the new change-of-measure algorithm with $S\!>\!0$, 
there exist constants $C_{(1)}, C_{(2)}>0$ such that 
for any $T\!>\!0$ and $p\!\geq\! 1$, and
for all $0\!<\!h\!<\!C_{(1)}$
\[
\sup_{0\leq n \leq N} \EE\left[ \| \widehat{Y}^f_{t_n} \|^p \right]^{1/p} 
\leq C_{(2)}\, p^{1/2}, ~~
\sup_{0\leq n \leq N} \EE\left[ \| \widehat{Y}^c_{t_n} \|^p  \right]^{1/p}  
\leq C_{(2)}\, p^{1/2}.
\]
\end{theorem}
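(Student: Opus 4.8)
The plan is to control the two paths jointly through the Lyapunov function $V_n \coloneqq \|\hY^f_{\tn}\|^2 + \|\hY^c_{\tn}\|^2$: first I would establish a one-step drift inequality $\EE[V_{n+1}\mid\mathcal{F}_{\tn}] \le (1-ch)\,V_n + C'h$ valid for all $0<h<C_{(1)}$ with $c,C'>0$ independent of $T$, and then upgrade this $L^2$ estimate to a uniform-in-$T$ sub-Gaussian bound from which the $p^{1/2}$ scaling follows. The guiding structural observation is that, although the spring term $S(\hY^c-\hY^f)$ is only linearly bounded and on its own could drive exponential growth of either path, it is \emph{conservative} for the sum $V_n$: the spring contribution to the evolution of $\|\hY^f\|^2$ is $2hS\langle \hY^f,\hY^c-\hY^f\rangle$ and that to $\|\hY^c\|^2$ is $2hS\langle\hY^c,\hY^f-\hY^c\rangle$, whose sum is $4hS\langle\hY^f,\hY^c\rangle - 2hS(\|\hY^f\|^2+\|\hY^c\|^2)\le 0$ by Cauchy--Schwarz. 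Hence the spring never increases $V_n$, the genuine restoring force is supplied entirely by the dissipativity of $f$, and this is why the conclusion holds for \emph{every} $S>0$ rather than only $S>\tfrac{\lambda}{2}$.

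Concretely I would work over a full coarse step $t_{2n}\to t_{2n+2}$, over which the coarse path takes a single Euler step of size $2h$ while the fine path takes two steps of size $h$ sharing the same total increment $\DW_{2n}+\DW_{2n+1}$. Expanding the squared norms, the cross terms with the Brownian increments vanish under $\EE[\,\cdot\mid\mathcal{F}_{t_{2n}}]$, leaving the deterministic drift contributions plus the noise variance $\EE[\|\DW_{2n}+\DW_{2n+1}\|^2]=2mh$ per path. Applying the dissipativity bound $2h\langle x,f(x)\rangle\le -2\tilde\alpha h\|x\|^2+2\tilde\beta h$ to each of $\hY^f,\hY^c$ — and to the fine intermediate value $\hY^f_{t_{2n+1}}$, whose discrepancy from $\hY^f_{t_{2n}}$ is $O(h)$ and is absorbed using $\|f(x)\|^2\le 2\|f(0)\|^2+2K^2\|x\|^2$ — together with the non-positivity of the summed spring terms above, yields
\[
\EE[V_{n+1}\mid\mathcal{F}_{t_{2n}}]\le\bigl(1-4\tilde\alpha h + C_1 h^2\bigr)V_n + C' h .
\]
For $h$ below a threshold $C_{(1)}$ depending only on $\tilde\alpha,K,S,m$, the bracket is $\le 1-\tilde c h$ with $\tilde c>0$, giving the drift inequality; the odd intermediate times are then bounded in terms of the even times by a single Lipschitz-controlled step.

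Iterating $\EE[V_{n+1}]\le(1-\tilde ch)\EE[V_n]+C'h$ geometrically gives $\sup_{n}\EE[V_n]\le V_0 + C'/\tilde c$ uniformly in $T$, which settles the case $p=2$. To obtain the sharp $p^{1/2}$ growth for all $p\ge1$ I would promote the drift inequality to a uniform exponential-moment bound: for $\theta>0$ small enough, the conditional fluctuation $V_{n+1}-\EE[V_{n+1}\mid\mathcal{F}_{t_{2n}}]$ is a quadratic-plus-linear functional of the Gaussian increments whose conditional moment generating function is explicitly sub-Gaussian with variance proxy $O(h(1+V_n))$; combining this with the $L^2$ contraction and choosing $\theta$ and $h$ suitably yields $\sup_n \EE[\exp(\theta V_n)]\le M<\infty$ independently of $T$. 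A standard sub-Gaussian tail estimate then gives $\EE[\|\hY^f_{\tn}\|^p]^{1/p}\le \EE[V_n^{p/2}]^{1/p}\le C_{(2)}\,p^{1/2}$, and likewise for $\hY^c$.

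The hard part is the spring coupling combined with the even/odd asymmetry of the scheme. Since the spring term admits no dissipativity estimate in isolation, the argument must exploit the exact cancellation of the destabilizing cross-terms in the \emph{summed} Lyapunov function rather than treat either path separately, and it must carry this cancellation through the genuinely two-scale double step in which the coarse drift is frozen at $t_{2n}$ while the fine path visits the intermediate state $\hY^f_{t_{2n+1}}$. Tracking the resulting $O(h^2)$ remainders so as to fix $C_{(1)}$ explicitly, and — above all — establishing the uniform-in-$T$ exponential moment needed for the exact $p^{1/2}$ rate rather than a weaker $p$-dependence, is the delicate step.
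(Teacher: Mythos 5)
Your proposal is sound and would prove the theorem, but by a genuinely different route from the paper's. Both arguments rest on the same structural observation: for the summed Lyapunov quantity $V_n=\|\hY^f_{t_n}\|^2+\|\hY^c_{t_n}\|^2$ the spring cross-terms cancel (the paper realizes this through the convexity splitting $\|Sh\,A+(1-Sh)B\|^2\le Sh\|A\|^2+(1-Sh)\|B\|^2$ followed by adding the fine and coarse inequalities; you realize it via Cauchy--Schwarz on $4hS\langle\hY^f,\hY^c\rangle$), and in both the restoring force comes solely from dissipativity, which is exactly why $S>0$ suffices. The divergence is in how the $p^{1/2}$ rate is extracted. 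The paper never takes conditional expectations: it keeps the martingale increments $\langle\phi_{t_k},\DW_k\rangle$ pathwise, multiplies by the weights $\e^{2\gamma t_k}$, telescopes, raises the accumulated inequality to the power $p/2$, takes $\EE\left[\sup_n(\cdot)\right]$, and controls the martingale part with the Burkholder--Davis--Gundy inequality (constant $(C_{\mathrm{BDG}}\,p)^{p/4}$) plus a Young-inequality bootstrap; the $p^{p/2}$ growth there comes from Gaussian moments and the BDG constant. You instead kill the martingale terms by conditioning, obtain a Foster--Lyapunov one-step drift inequality, and upgrade it to a uniform-in-$T$ exponential moment bound $\sup_n\EE[\exp(\theta V_n)]\le M$, from which all $p$-th moments follow at once by tail integration, $\EE[V_n^{p/2}]^{1/p}\le M^{1/p}\,\theta^{-1/2}\,\Gamma(p/2+1)^{1/p}\lesssim p^{1/2}$. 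Your route avoids BDG and supremum estimates entirely and yields a strictly stronger intermediate result (uniform sub-Gaussianity of $\|\hY_{t_n}\|$ from a single MGF bound); the paper's route yields a supremum-in-time moment bound $\EE\left[\sup_n \e^{\gamma p t_n}V_n^{p/2}\right]$, stronger in a different direction, though the theorem and its later uses only need pointwise-in-$n$ moments, so either form suffices.

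Two wrinkles in your outline, neither fatal, deserve care when written out. First, the cross terms do not all vanish exactly under $\EE[\,\cdot\mid\mathcal{F}_{t_{2n}}]$: the fine path's second sub-step evaluates $f$ and the spring at $\hY^f_{t_{2n+1}}$, which is correlated with $\DW_{2n}$, and the discrepancy $\hY^f_{t_{2n+1}}-\hY^f_{t_{2n}}$ is $O(h^{1/2})$ (it contains $\DW_{2n}$), not $O(h)$ as you assert. What saves the argument is that the spring difference $\hY^c_{t_{2n+1}}-\hY^f_{t_{2n+1}}$ is $\mathcal{F}_{t_{2n}}$-measurable (the common Brownian increment cancels), so the dangerous terms either have zero conditional mean or pick up $\EE\|\DW_{2n}\|^2=mh$ times another factor of $h$, landing in the $O(h^2)(1+V_n)$ remainder as required. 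Second, the fluctuation $V_{n+1}-\EE[V_{n+1}\mid\mathcal{F}_{t_{2n}}]$ is not sub-Gaussian: its quadratic part $\|\DW\|^2$ is only sub-exponential, so the conditional MGF bound holds only for $\theta$ below a fixed threshold, and the conditioning should be done sequentially through the intermediate time $t_{2n+1}$ (tower property) since the variance proxy there involves the intermediate state. Since you only need one fixed small $\theta$ independent of $T$, this is harmless, but it is the place where the exponential-moment step must be argued carefully.
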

\begin{proof}
The proof is deferred to section \ref{proof 1}.
\end{proof}
It is important to note that the constants $C_{(1)}, C_{(2)}$ depend on the specifics of the original SDE and the value of $S,$ but not on $T,\,h$ or the moment power $p.$ 
This result is expected since the spring term is only a linear function of the numerical solution and the magnitude is small which does not destroy the dissipativity condition and allow us to obtain the uniform bounds.  
For the first-order strong convergence, we need the following assumption.
\begin{assumption}[One-sided Lipschitz properties]
\label{assp:enhanced_one_sided_Lipschitz}
There exists a constant $\lambda\!>\!0$ such that for all $x,y\in\RR$,
$f$ satisfies the one-sided Lipschitz condition:
\begin{equation}
\langle x - y,f(x) - f(y)\rangle \leq \lambda\,\|x - y\|^2,
\label{eq:one_sided_Lipschitz3}
\end{equation}
and $f$ is differentiable and $\nabla f(x)$ satisfies the Lipschitz condition $\|\nabla f(x) - \nabla f(y)\|\leq K\|x-y\|.$
\end{assumption}

Note that the Lipschitz condition (\ref{Lipschitz condition}) implies this one-sided Lipschitz condition (\ref{eq:one_sided_Lipschitz3}). However, the one-sided Lipschitz condition can give a sharper bound for the positive side, which means that $K$ can be much larger than $\lambda.$ The spring term in our algorithm is only needed when the inner product $\langle x - y,f(x) - f(y)\rangle$ is positive, to prevent the exponential divergence of the fine and coarse paths. See the adaptive spring for double-well potential energy SDE in section \ref{Non-Lipschitz section} where we choose $S$ to be a function of the current state to minimize the spring term and thereby reduce the size of the Radon-Nikodym derivative. The other consideration is that possibly we can extend this scheme to SDEs with locally one-sided Lipschitz drift, for example the stochastic Lorenz equation. Therefore, this condition helps us to obtain an accurate choice of spring term $S$ as shown in the following theorem.

\begin{theorem}[Difference between fine and coarse paths]
\label{thm: strong error}
If the original SDE satisfies Assumptions \ref{assp:linear_growth}  and  \ref{assp:enhanced_one_sided_Lipschitz}, then using 
the new change-of-measure algorithm with $S\!>\!\lambda/2$, 
there exist constants $C_{(1)},$ $C_{(2)}>0$ such that 
for any $T\!>\!0$ and $p\!\geq\! 1$, and 
for all $0\!<\!h\!<\!C_{(1)},$
\[
\sup_{0\leq n \leq N} \EE\left[ \| \widehat{Y}^f_{t_n} - \widehat{Y}^c_{t_n} \|^p \right]^{1/p} 
\leq C_{(2)}\, \min\left( p^{1/2}\, h^{1/2},\ p\, h\right).
\]
\end{theorem}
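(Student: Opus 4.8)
The plan is to track the difference process $e_t:=\hY^f_t-\hY^c_t$, which starts at $e_{t_0}=0$, and to show that the spring term manufactures a genuine \emph{mean-square} contraction (not an operator-norm one), so that a single geometric summation gives a bound independent of $T$. First I would write the two-step (coarse-grid) recursion $e_{t_{2n+2}}=A_n\,e_{t_{2n}}+b_n$: since the fine and coarse paths share the Brownian motion, the increments $\DW_{2n},\DW_{2n+1}$ cancel and $b_n$ is a pure drift/spring source. Linearising the drift difference as $f(\hY^f_{t_{2n}})-f(\hY^c_{t_{2n}})=G_n\,e_{t_{2n}}$ with $G_n:=\int_0^1\nabla f\big(\hY^c_{t_{2n}}+\theta e_{t_{2n}}\big)\,\D\theta$, the linear-in-$e_{t_{2n}}$ part yields $A_n=(1-4Sh)I+2hG_n+O(h^2)$ and $b_n$ collects the remainder. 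I would then split $b_n=m_n+r_n$ by Taylor expanding the coarse-freezing terms $f(\hY^c_t)-f(\hY^c_{t_{2n}})$ and the analogous fine-path terms (using that $\nabla f$ is Lipschitz, Assumption \ref{assp:enhanced_one_sided_Lipschitz}): $m_n$ is the part proportional to $\nabla f$ times time integrals of the within-step fluctuations $W_s-W_{t_{2n}}$, which satisfies $\EE[m_n\mid\mathcal F_{t_{2n}}]=0$ and $\|m_n\|_{L^{2p}}\lesssim p^{1/2}h^{3/2}$, while $r_n=O(h^2)$ gathers the drift-driven and quadratic Taylor remainders. All prefactors involving $\|\hY^f\|,\|\hY^c\|,\|\nabla f\|$ are controlled uniformly in $T,h,p$ by Theorem \ref{Theorem: stability of path}.

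The second step is the contraction, which is the source of uniformity in $T$. Because $\|A_n\|_{\mathrm{op}}$ need not lie below $1$ when $K\gg\lambda$, I would avoid any pathwise norm bound and instead estimate $\langle e,A_n^{\mathsf{T}}A_n e\rangle$, whose only $O(h)$ symmetric contribution reduces to $4h\,\langle e_{t_{2n}},G_n e_{t_{2n}}\rangle=4h\,\langle e_{t_{2n}},f(\hY^f_{t_{2n}})-f(\hY^c_{t_{2n}})\rangle\le 4h\lambda\|e_{t_{2n}}\|^2$ by the one-sided Lipschitz condition (\ref{eq:one_sided_Lipschitz3}). This gives $\|A_n e_{t_{2n}}\|^2\le\big(1-4(2S-\lambda)h+O(h^2)\big)\|e_{t_{2n}}\|^2$, the discrete analogue of the contraction $\D\|Y^f-Y^c\|^2\le 2(\lambda-2S)\|Y^f-Y^c\|^2\,\D t$ noted in the introduction, and it is strictly contractive precisely when $S>\lambda/2$.

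For the crude half-order estimate I would run an energy recursion for $\EE\big[\|e_{t_{2n}}\|^{2p}\big]$ \emph{without} using any cancellation: conditioning on $\mathcal F_{t_{2n}}$, applying the contraction above to $\|A_n e_{t_{2n}}\|^2$, and bounding the cross term by Cauchy--Schwarz with $\|b_n\|_{L^{2p}}\lesssim p^{1/2}h^{3/2}$. A weighted Young inequality absorbs the cross term into the contraction and leaves $\EE\|e_{t_{2n+2}}\|^{2p}\le(1-c'ph)\,\EE\|e_{t_{2n}}\|^{2p}+\big(Cp^{1/2}h^{1/2}\big)^{2p}$, whose geometric sum is finite thanks to the contraction and hence independent of $T$, giving $\EE\big[\|e_{t_{2n}}\|^{2p}\big]^{1/2p}\le Cp^{1/2}h^{1/2}$.

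For the sharp first-order estimate I would repeat the recursion but now exploit that $A_n e_{t_{2n}}$ is $\mathcal F_{t_{2n}}$-measurable while $\EE[m_n\mid\mathcal F_{t_{2n}}]=0$, so the leading cross term $\EE\langle A_n e_{t_{2n}},m_n\rangle$ vanishes; only the cross term with the $O(h^2)$ remainder $r_n$ and the term $\EE\|b_n\|^2=O(h^3)$ survive, yielding $g_{n+1}\le(1-ch)g_n+Ch^3$ and hence $\EE\|e_{t_{2n}}\|^2\le Ch^2$. The binomial expansion of $\EE\|e_{t_{2n}}\|^{2p}$ propagates the $p$-dependence to give $\EE\big[\|e_{t_{2n}}\|^{2p}\big]^{1/2p}\le Cph$; values at odd fine nodes and for non-even $p$ follow from one extra step and monotonicity of $L^p$ norms, and the two estimates combine into the claimed $\min(p^{1/2}h^{1/2},ph)$. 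I expect the main obstacle to be exactly this cancellation: since the coarse path freezes its drift over the whole interval $[t_{2n},t_{2n+2}]$, making $\EE[m_n\mid\mathcal F_{t_{2n}}]=0$ requires pairing every leading source term against the coarse-grid value $e_{t_{2n}}$ rather than against $e_{t_{2n+1}}$ (relative to which $\DW_{2n}$ is no longer a future increment), and then proving that all the resulting freezing corrections $e_{t_{2n+1}}-e_{t_{2n}}$ together with the Taylor remainders are genuinely $O(h^2)$ and summable against the contraction weights uniformly in $p,h$ and $T$.
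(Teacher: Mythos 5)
Your proposal is correct in substance and shares the paper's skeleton: the same two-step difference recursion in which the shared Brownian increments cancel, the same observation that the spring turns the one-step map into a mean-square contraction with rate $4(2S-\lambda)h$ via the one-sided Lipschitz condition (the paper's $\gamma\in(0,2S-\lambda)$), the same splitting of the drift-mismatch source $b_n=\bigl(f(\hY^f_{t_{2n+1}})-f(\hY^f_{t_{2n}})\bigr)h$ into a conditionally centred part $\nabla f(\hY^f_{t_{2n}})\,\Delta W_{2n}\,h$ of size $p^{1/2}h^{3/2}$ plus $O(p\,h^2)$ Taylor/drift remainders (the paper's $J_1$--$J_4$), and the same endgame (odd nodes by one extra step, small $p$ by H\"older). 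Where you genuinely differ is the mechanism that upgrades half order to first order: the paper multiplies by $\e^{\gamma t}$ weights, sums over timesteps, raises to the power $p/2$, bounds $\EE\bigl[\sup_n(\cdot)\bigr]$, and controls the accumulated martingale term ($J_3$) with the Burkholder--Davis--Gundy inequality before absorbing it by Young's inequality; you instead kill the martingale contribution step by step inside a discrete Gronwall recursion for $\EE\|e_{t_{2n}}\|^{2p}$, using only the tower property $\EE[m_n\mid\mathcal{F}_{t_{2n}}]=0$ and the $\mathcal{F}_{t_{2n}}$-measurability of $A_ne_{t_{2n}}$. Your route is more elementary (no BDG, no supremum of weighted sums) and directly targets $\sup_n\EE[\cdot]$, which is all the theorem asserts; the paper's route proves the stronger $\EE[\sup_n(\cdot)]$ bound en route, at the cost of heavier machinery. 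You also correctly isolate the one genuinely delicate point: the centring must be taken relative to $\mathcal{F}_{t_{2n}}$, not $\mathcal{F}_{t_{2n+1}}$, since $\Delta W_{2n}$ is not a future increment of the latter.

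One display in your half-order argument is off by a power of $h$: with $\|b_n\|_{L^{2p}}\lesssim p^{1/2}h^{3/2}$ and contraction factor $(1-ch)$, the weighted Young step gives a per-step source of order $C^{2p}p^{p}h^{p+1}$, not $\bigl(Cp^{1/2}h^{1/2}\bigr)^{2p}=C^{2p}p^{p}h^{p}$; as written, the geometric sum against $(1-c'ph)$ would leave $\EE\bigl[\|e_{t_{2n}}\|^{2p}\bigr]^{1/(2p)}\lesssim p^{1/2}h^{1/2-1/(2p)}$. With the corrected source the geometric sum gives exactly $C^{2p}p^{p}h^{p}$, i.e.\ the claimed $p^{1/2}h^{1/2}$, so this is a bookkeeping slip rather than a gap in the argument.
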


\begin{proof}
The proof is deferred to section \ref{proof 2}.
\end{proof}
The $L_p$ norm of the difference between the fine and coarse paths, as we expected, is uniformly bounded since we add enough spring term to recover the contractivity used in \cite{Part2}. With this result, we can bound the $p$th-moment of the Radon-Nikodym derivatives and then the MLMC estimator (\ref{eq: level estimator}).

\begin{theorem}[Radon-Nikodym moments]
\label{lemma 1}
If the original SDE satisfies Assumptions \ref{assp:linear_growth} and \ref{assp:enhanced_one_sided_Lipschitz}, then using 
the new change-of-measure algorithm with $S\!>\!\lambda/2$,
there exist constants $C_{(1)}, C_{(2)}>0$ such that, 
for any $T\!>\!0$ and $p\!\geq\! 1$, and 
for all $0\!<\!h < \min(C_{(1)},C_{(2)}/(Tp^2)),$
\begin{equation*}
\EE\left[ \left|\frac{\D \widehat{\mathbb{Q}}^c}{\D \mathbb{P}} \right|^p \right]
\leq 2, \ \ \ \ \EE\left[ \left|\frac{\D \widehat{\mathbb{Q}}^f}{\D \mathbb{P}} \right|^p \right] 
\leq 2.
\end{equation*}
\end{theorem}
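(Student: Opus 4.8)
The plan is to treat the Radon--Nikodym derivative as a discrete Dol\'eans--Dade exponential, reduce its $p$-th moment to an exponential moment of the squared coarse--fine difference under a tilted measure, and then control that moment with Theorem~\ref{thm: strong error}. I focus on $\D\widehat{\QQ}^f/\D\PP$, the coarse case being identical in structure. Writing $\widehat{S}^f_n = S(\hY^c_{t_n}-\hY^f_{t_n})$, which is $\mathcal{F}_{t_n}$-measurable, the single-step factor in (\ref{eq: RD}) gives
\[
\frac{\D \widehat{\QQ}^f}{\D \PP} \;=\; \exp\!\left( -\sum_{n=0}^{N-1}\langle \Delta W_n^\PP, \widehat{S}^f_n\rangle \;-\;\tfrac{h}{2}\sum_{n=0}^{N-1}\|\widehat{S}^f_n\|^2\right).
\]
Since $\Delta W_n^\PP\sim N(0,hI)$ is independent of $\mathcal{F}_{t_n}$, each factor has conditional mean one, so $\D\widehat{\QQ}^f/\D\PP$ is a mean-one $\PP$-martingale, and in particular it is positive so the absolute value in the statement may be dropped.

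First I would complete the square. Raising to the $p$-th power and inserting $\pm\tfrac{p^2h}{2}\sum\|\widehat{S}^f_n\|^2$ in the exponent gives
\[
\left(\frac{\D \widehat{\QQ}^f}{\D \PP}\right)^{p} \;=\; Z^{(p)}\,\exp\!\left(\tfrac{p(p-1)h}{2}\sum_{n=0}^{N-1}\|\widehat{S}^f_n\|^2\right),
\]
where $Z^{(p)}:=\exp(-p\sum_{n}\langle \Delta W_n^\PP,\widehat{S}^f_n\rangle-\tfrac{p^2h}{2}\sum_n\|\widehat{S}^f_n\|^2)$ is again a mean-one $\PP$-martingale and hence defines a probability measure $\widetilde{\QQ}$ by $\D\widetilde{\QQ}/\D\PP=Z^{(p)}$. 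Writing $D_{t_n}=\hY^f_{t_n}-\hY^c_{t_n}$, so that $\|\widehat{S}^f_n\|^2=S^2\|D_{t_n}\|^2$, this yields the reduction
\[
\EE^\PP\!\left[\left(\frac{\D \widehat{\QQ}^f}{\D \PP}\right)^{p}\right] \;=\; \EE^{\widetilde{\QQ}}\!\left[\exp\!\left(\tfrac{p(p-1)hS^2}{2}\sum_{n=0}^{N-1}\|D_{t_n}\|^2\right)\right].
\]
The crucial structural point is that under $\widetilde{\QQ}$ every increment is shifted by the same amount $-p\,\widehat{S}^f_n h$ in both the fine and the coarse update (they share $\Delta W_n^\PP$), so this shift cancels in the difference and the recursion for $D_{t_n}$ keeps exactly the form it has under $\PP$. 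Consequently the contractivity underlying Theorem~\ref{thm: strong error} is preserved, and one obtains under $\widetilde{\QQ}$ the same sub-Gaussian bound $\EE^{\widetilde{\QQ}}[\|D_{t_n}\|^{2q}]\le (c_0\,q\,h)^{q}$ with $c_0$ independent of $n,T,p$.

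To exploit this I would apply the generalised H\"older inequality with $N$ equal exponents, using $Nh=T$:
\[
\EE^{\widetilde{\QQ}}\!\left[\exp\!\left(\tfrac{p(p-1)hS^2}{2}\sum_{n=0}^{N-1}\|D_{t_n}\|^2\right)\right] \;\le\; \prod_{n=0}^{N-1}\EE^{\widetilde{\QQ}}\!\left[\exp\!\left(\tfrac{p(p-1)TS^2}{2}\|D_{t_n}\|^2\right)\right]^{1/N}.
\]
Expanding each exponential as a power series and inserting the sub-Gaussian moment bound, each factor is at most $(1-e\,c_0 h\cdot\tfrac{p(p-1)TS^2}{2})^{-1}$, which is $\le 2$ as soon as $e\,c_0 S^2 p(p-1)Th\le 1$; since $p(p-1)\le p^2$ this is guaranteed by $h<C_{(2)}/(Tp^2)$ for a suitable $C_{(2)}$, whence the product is $\le 2^{N/N}=2$. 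The coarse derivative is handled identically, with timestep $2h$ and spring $S(\hY^f_{t_{2n}}-\hY^c_{t_{2n}})$.

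The main obstacle is the step I passed over quickly: showing that the sub-Gaussian difference bound survives the change of measure with constants uniform in $T$ and $p$. Under $\widetilde{\QQ}$ the two paths solve the same scheme but with asymmetric spring coefficients $(1-p)S$ and $(1+p)S$, so Theorems~\ref{Theorem: stability of path}--\ref{thm: strong error} do not apply verbatim. I would re-run their energy estimates for this tilted system, using that (i) the equation for $D_{t_n}$ is $p$-independent and contractive for $S>\lambda/2$, which delivers the sub-Gaussian control of $\|D_{t_n}\|$, and (ii) the sum $\hY^f+\hY^c$ retains its dissipative drift up to an $O(p\|D_{t_n}\|)$ forcing, which is negligible because the stepsize restriction forces $ph\to 0$. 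This keeps the moment bounds of Theorem~\ref{Theorem: stability of path} valid under $\widetilde{\QQ}$ and closes the mild bootstrap between the stability and difference estimates.
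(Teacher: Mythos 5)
Your overall architecture (discrete Girsanov factorisation, power-series expansion with Stirling's bound, and the resulting stepsize restriction $h\lesssim 1/(Tp^2)$) is sound, and the algebra of your exact factorisation $(\D\widehat{\QQ}^f/\D\PP)^p = Z^{(p)}\exp\bigl(\tfrac{p(p-1)h}{2}\sum_n\|\widehat{S}^f_n\|^2\bigr)$ is correct. However, there is a genuine gap at precisely the point you flag: the sub-Gaussian bound $\EE^{\widetilde{\QQ}}[\|D_{t_n}\|^{2q}]\le (c_0\,q\,h)^q$ with $c_0$ uniform in $n$, $T$ \emph{and} $p$ is asserted, not proven, and it cannot be obtained by citing Theorem~\ref{thm: strong error}, which is established under $\PP$. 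Form-invariance of the difference recursion under $\widetilde{\QQ}$ does not suffice, because that recursion is not autonomous: at even steps it contains $f(\hY^f_{t_{2n+1}})-f(\hY^f_{t_{2n}})$, and the paper controls this term through the path-moment bounds of Theorem~\ref{Theorem: stability of path}. Under $\widetilde{\QQ}$ the pair evolves with asymmetric, $p$-dependent spring coefficients $(1-p)S$ and $(1+p)S$; for $p>1$ the fine-path coefficient $(1-p)S$ is \emph{negative}, so the convex-combination device (the choice $\xi=Sh$) at the heart of the stability proof breaks down, and the fine-path spring must instead be treated as a forcing of size $\sim p\|D_{t_n}\|h$ — whose moments are exactly what you are trying to establish. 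Stability and the difference bound under $\widetilde{\QQ}$ therefore have to be proven \emph{simultaneously} by the bootstrap you mention, with constants tracked uniformly in $p$ and $T$; this is a piece of work comparable to the paper's proofs of Theorems~\ref{Theorem: stability of path} and~\ref{thm: strong error} themselves, and it is missing.

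The paper avoids the tilted measure entirely, and you could too, at the cost of a harmless constant. Instead of completing the square exactly, apply the Cauchy--Schwarz inequality to split $\EE\bigl[|\D\widehat{\QQ}^c/\D\PP|^p\bigr]\le I_1^{1/2}I_2^{1/2}$, where $I_2$ is the expectation of the Dol\'eans exponential with parameter $2pS$ (a supermartingale, so $I_2\le 1$), and $I_1=\EE^{\PP}\bigl[\exp\bigl(p(2p-1)S^2\sum_n\|D_{t_{2n}}\|^2\,2h\bigr)\bigr]$ is an exponential moment under the \emph{original} measure $\PP$, where Theorem~\ref{thm: strong error} applies verbatim. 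The paper then runs essentially the same computation you propose — Fatou plus Jensen applied to the sum (rather than your generalised H\"older over the $N$ factors, an immaterial difference) followed by Stirling — to conclude $I_1<2$ whenever $h\lesssim 1/(Tp^2)$. Your exact factorisation improves the exponent's constant from $p(2p-1)$ to $p(p-1)/2$, which buys nothing in the final statement, while costing the entire unproven tilted-measure analysis.
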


\begin{proof}
The proof is deferred to section \ref{proof 4}.
\end{proof}

\begin{theorem}[MLMC moments]
\label{thm: level variance}
If the original SDE satisfies Assumptions \ref{assp:linear_growth} and \ref{assp:enhanced_one_sided_Lipschitz}, and
$\varphi: \RR^m \rightarrow \RR$ is globally Lipschitz, then using the 
new change-of-measure algorithm with $S\!>\!\lambda/2$, for any
$T\!>\!0$ and $p\!\geq 1$ there exists constants
$C_{(1)}, C_{(2)}, C_{(3)} >0$ such that for all 
$0< h< \min(C_{(1)},C_{(2)}/(Tp^2),$
\[
\EE\left[ \left|
\varphi(\widehat{Y}^f_T)\, \frac{\D \widehat{\QQ}^f}{\D \PP} -
\varphi(\widehat{Y}^c_T)\, \frac{\D \widehat{\QQ}^c}{\D \PP} 
\right|^p \right]^{1/p} 
\leq C_{(3)}\, p^{2} \, \sqrt{T}\, h.
\]
\end{theorem}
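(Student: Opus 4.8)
The plan is to bound the $L^p$-norm of the level correction $\varphi(\hY_T^f)\Rr_T^f - \varphi(\hY_T^c)\Rr_T^c$ by splitting it into a ``path-difference'' part and a ``weight-difference'' part,
\[
\varphi(\hY_T^f)\Rr_T^f - \varphi(\hY_T^c)\Rr_T^c = \big(\varphi(\hY_T^f) - \varphi(\hY_T^c)\big)\Rr_T^f + \varphi(\hY_T^c)\big(\Rr_T^f - \Rr_T^c\big),
\]
and controlling each summand with H\"older's inequality (splitting each product at conjugate exponents $2p,2p$) together with the three preceding theorems. For the first summand, the Lipschitz property of $\varphi$ and Theorem \ref{thm: strong error} give $\|\varphi(\hY_T^f)-\varphi(\hY_T^c)\|_{2p}\le C\,p\,h$ (using $\min(q^{1/2}h^{1/2},qh)\le qh$), while Theorem \ref{lemma 1} gives $\|\Rr_T^f\|_{2p}\le 2$; hence this term is $O(p\,h)$, which is dominated by the target $p^2\sqrt T\,h$ in the large-$T$ regime ($T\gtrsim 1$) relevant for the ergodic application. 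For the second summand, the Lipschitz property of $\varphi$ and the stability Theorem \ref{Theorem: stability of path} give $\|\varphi(\hY_T^c)\|_{2p}\le C\,p^{1/2}$, so the whole problem reduces to proving $\big\|\Rr_T^f - \Rr_T^c\big\|_{2p} \le C\,p^{3/2}\,\sqrt T\,h$.

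To estimate the weight difference I would pass to logarithms. Using $|\e^a-\e^b|\le |a-b|(\e^a+\e^b)$ and H\"older at exponents $4p,4p$,
\[
\big\|\Rr_T^f - \Rr_T^c\big\|_{2p} \le \big\|\log \Rr_T^f - \log \Rr_T^c\big\|_{4p}\,\big\|\Rr_T^f + \Rr_T^c\big\|_{4p},
\]
and Theorem \ref{lemma 1} bounds the last factor by a constant. From the explicit products in (\ref{eq: RD}) and the spring definitions $\widehat S^f_n = -S\,D_{t_n}$, $\widehat S^c_n = S\,D_{t_{2n}}$ with $D_{t_n}:=\hY^f_{t_n}-\hY^c_{t_n}$, the log-difference splits as $\log\Rr_T^f-\log\Rr_T^c = M + A$, where
\[
M = \sum_{k=0}^{N-1}\langle \Delta W_k, G_k\rangle,\qquad A = -\tfrac{S^2 h}{2}\sum_{n}\big(\|D_{t_{2n+1}}\|^2-\|D_{t_{2n}}\|^2\big),
\]
with $\mathcal F_{t_k}$-measurable coefficients $G_{2n}=2S\,D_{t_{2n}}$ and $G_{2n+1}=S\,(D_{t_{2n+1}}+D_{t_{2n}})$ (the Brownian increments attached to the coarse weight recombine with the fine ones, and the purely quadratic terms partially cancel to leave $A$).

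The heart of the proof is the martingale term $M$. Since each $G_k$ is $\mathcal F_{t_k}$-measurable and $\Delta W_k$ is the increment of $W$ over $[t_k,t_{k+1})$, $M$ equals the It\^o integral $\int_0^T\langle \bar G_s,\D W_s\rangle$ of the piecewise-constant integrand $\bar G$, with predictable quadratic variation $\langle M\rangle_T = h\sum_k\|G_k\|^2$. I would apply the Burkholder--Davis--Gundy inequality in the form $\|M\|_{q}\le C\sqrt q\,\|\langle M\rangle_T^{1/2}\|_{q}$ at $q=4p$, then Minkowski on the sum and Theorem \ref{thm: strong error} in the unconditional form $\|D_{t_k}\|_{q}\le C\,q\,h$. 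This yields $\|\langle M\rangle_T\|_{2p}\le h\cdot N\cdot \max_k\|G_k\|_{4p}^2 \le C\,h\,(T/h)\,(p\,h)^2 = C\,p^2\,T\,h^2$, hence $\|M\|_{4p}\le C\sqrt p\,(p^2 T h^2)^{1/2} = C\,p^{3/2}\sqrt T\,h$. It is precisely the $\sqrt T$ here --- the $N=T/h$ increments each of conditional size $O(h^2)\cdot h$ --- that produces the advertised linear-in-$T$ variance growth. The bracket term $A$ is genuinely lower order: writing $\|D_{t_{2n+1}}\|^2-\|D_{t_{2n}}\|^2=\langle D_{t_{2n+1}}-D_{t_{2n}},D_{t_{2n+1}}+D_{t_{2n}}\rangle$ and using that the Brownian increments cancel in $D_{t_{2n+1}}-D_{t_{2n}}=(-2S\,D_{t_{2n}}+f(\hY^f_{t_{2n}})-f(\hY^c_{t_{2n}}))h$, one gains an extra $h$ per term and finds $\|A\|_{4p}=O(p^2 T h^3)$, negligible against $p^{3/2}\sqrt T h$ under $h<C_{(2)}/(Tp^2)$.

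The step I expect to be the main obstacle is obtaining the \emph{sharp} power of $p$ in the martingale estimate. A naive Minkowski bound applied to the ordinary quadratic variation $\sum_k\langle\Delta W_k,G_k\rangle^2$ loses a factor $p^{1/2}$, because the Gaussian moments of $\|\Delta W_k\|$ contribute an extra $\sqrt q$, degrading the rate to $p^{5/2}\sqrt T h$; the It\^o-integral viewpoint, which works directly with the \emph{predictable} quadratic variation $h\sum_k\|G_k\|^2$ and pairs it with the $O(\sqrt q)$ BDG constant, is what recovers the stated $p^2$. The other point requiring care is bookkeeping of how the moment order inflates from $p$ to $2p$ to $4p$ (to $8p$ in the cross-H\"older steps) through the successive splittings, and absorbing the resulting powers of the moment order into the constants $C_{(1)},C_{(2)}$ in the restriction on $h$ so that Theorem \ref{lemma 1} remains applicable throughout.
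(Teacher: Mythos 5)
Your proposal is correct and reaches the stated $C\,p^2\sqrt{T}\,h$ bound with the same toolbox as the paper (H\"older splittings, a mean-value bound on the exponential, BDG applied through the It\^o-integral/predictable-quadratic-variation form, and Theorems \ref{Theorem: stability of path}--\ref{lemma 1}), but via a genuinely different decomposition. The paper splits the correction into \emph{three} terms, $\varphi(\hY_T^f)\bigl(\Rr_T^f-1\bigr)+\bigl(\varphi(\hY_T^f)-\varphi(\hY_T^c)\bigr)+\varphi(\hY_T^c)\bigl(1-\Rr_T^c\bigr)$, so it only ever needs each weight separately to be close to $1$: writing $\Rr_T^c=\exp(\mathcal{H})$ and $\e^x=1+\e^{\xi(x)}x$, it shows $\EE\bigl[|\mathcal{H}|^{4p}\bigr]^{1/(4p)}\lesssim p^{3/2}\sqrt{T}\,h$ by BDG plus Theorem \ref{thm: strong error} --- essentially your martingale computation, but applied to one log-weight at a time --- with Theorem \ref{lemma 1} absorbing the factor $\e^{\xi(\mathcal{H})}$, and the fine-path weight handled by symmetry. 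Your two-term split instead requires the difference $\Rr_T^f-\Rr_T^c$ of the weights, which costs the extra algebra of recombining the two log-weights into the single martingale $M$ with coefficients $G_{2n}=2S\,D_{t_{2n}}$ and $G_{2n+1}=S(D_{t_{2n+1}}+D_{t_{2n}})$, where $D_{t_n}=\hY_{t_n}^f-\hY_{t_n}^c$, plus the bracket remainder $A$; your identification of these, including the cancellation of the Brownian increments in $D_{t_{2n+1}}-D_{t_{2n}}$, is correct. Note that because the spring enters the two weights with \emph{opposite} signs, the coefficients of your difference martingale are sums, not differences, of the individual ones: there is no fine/coarse cancellation to exploit, and none is needed, since each log-weight on its own is already $O(p^{3/2}\sqrt{T}\,h)$ in $L^{4p}$ --- this is exactly why the paper's weight-by-weight route achieves the same rate with lighter bookkeeping. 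Your two technical emphases match the paper's proof: it too rewrites the discrete sums as It\^o integrals and invokes the Barlow--Yor form of BDG to get the $\sqrt{q}$ constant, and it too absorbs the inflation of moment orders ($p\to 2p\to 4p$) into the constants restricting $h$. Finally, the caveat you flag --- that the $O(p\,h)$ path-difference term is dominated by $p^2\sqrt{T}\,h$ only when $\sqrt{T}\gtrsim 1/p$ --- is equally present in the paper's own combination step, where the term $C_1^p\,p^p\,h^p$ must be absorbed into $C_{(3)}^p\,p^{2p}\,T^{p/2}\,h^p$; so this is a (shared) blemish of the theorem's statement for small $T$, not a gap in your argument relative to the published one.
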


\vspace{-1em}

\begin{proof}
The proof is deferred to section \ref{proof 3}.
\end{proof}

Note that this theorem implies that the variance of the estimator (\ref{eq: level estimator}) is bounded by $C_2Th^2$ which increases linearly in $T.$

We now have everything we require to determine the MLMC complexity.

\begin{theorem}[MLMC for invariant measure]If $\varphi$ satisfies the Lipschitz condition and the SDE satisfies Assumption \ref{assp:linear_growth} and \ref{assp:enhanced_one_sided_Lipschitz} with convergence rate $\lambda^*$ and constant $\mu^*$ in \eqref{ConvergenceRateInvariant}, and Lyapunov exponent $\kappa$, then by choosing suitable values for $L,$ $S,$ $h_0$ and $N_\ell$ for each level $\ell,$ there exist constants $c_1,\,c_2,\, c_3>0$ such that the estimator $\widehat{\varphi}$ has a mean square error (MSE) with bound
\[\EE\left[(\widehat{\varphi}-\pi(\varphi))^2\right]\leq \varepsilon^2,\]   
with $0<\varepsilon<1$ and an expected computational cost $\mathrm{C}_{std}$ for the standard Monte Carlo estimator $\widehat{\varphi}_{std}$ \eqref{Std estimator} with bound
\[
\mathrm{C}_{std} \leq\ c_1 \ \varepsilon^{-3}|\log \varepsilon|,
\]
and an expected computational cost $\mathrm{C}_{mlmc}$ for the standard MLMC  estimator $\widehat{\varphi}_{mlmc}$ \eqref{MLMC estimator} with bound
\[
\mathrm{C}_{mlmc} \leq\ c_2\  \varepsilon^{-2-\frac{\kappa}{2\lambda^*}} |\log\varepsilon|,
\]
provided $\kappa/\lambda^*<2,$ and $\mathrm{C}_{com}$ for the new MLMC estimator with change of measure $\widehat{\varphi}_{new}$ \eqref{new MLMC estimator} with bound
\[
\mathrm{C}_{com} \leq\ c_3\  \varepsilon^{-2}|\log \varepsilon|^{2}.
\]
\label{thm: MLMC}
\end{theorem}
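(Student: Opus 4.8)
The plan is to apply the general multilevel complexity framework of \cite{giles08,giles15}: decompose the mean square error $\EE[(\widehat{\varphi}-\pi(\varphi))^2]$ into squared bias plus estimator variance and allocate a budget of $\tfrac12\varepsilon^2$ to each. The bias itself splits into two pieces. The invariant-measure truncation error is controlled by \eqref{ConvergenceRateInvariant}: choosing $T=\tfrac{1}{\lambda^*}\log(\varepsilon^{-1})+O(1)$ as in \eqref{eq:log T} forces $|\EE[\varphi(X_T)]-\pi(\varphi)|\le C\varepsilon$. The Euler--Maruyama weak discretization error is first order, so taking the finest timestep $h_L=2^{-L}h_0=O(\varepsilon)$, i.e. $L=O(|\log\varepsilon|)$, makes $|\EE[\varphi(\hX_T^L)-\varphi(X_T)]|\le C\varepsilon$. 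Hence both $T$ and $L$ grow only logarithmically in $\varepsilon^{-1}$ and the combined bias is $O(\varepsilon)$. Since the new estimator is exactly unbiased for $\EE^{\PP}[\varphi(\hX_T^L)]$ (the telescoping identity holds under $\PP$ by construction of the exact Radon--Nikodym weights $\Rr^{f}_T,\Rr^{c}_T$), the change of measure introduces no additional bias.

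For the new estimator I would bound the variance of the level-$\ell$ correction by applying Theorem \ref{thm: level variance} with $p=2$, giving $V_\ell\le C\,T\,h_\ell^2$, which grows only linearly in $T$. The cost of one sample on level $\ell$ is $C_\ell=O(T/h_\ell)$, proportional to the number of timesteps. Minimizing $\sum_\ell N_\ell C_\ell$ subject to $\sum_\ell N_\ell^{-1}V_\ell\le\tfrac12\varepsilon^2$ with the Lagrangian choice $N_\ell\propto\sqrt{V_\ell/C_\ell}$ yields an optimal cost proportional to $\varepsilon^{-2}\big(\sum_{\ell}\sqrt{V_\ell C_\ell}\big)^2$. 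Here $\sqrt{V_\ell C_\ell}=O\!\big(T\,h_\ell^{1/2}\big)$, so the geometric series $\sum_\ell h_\ell^{1/2}$ converges and $\sum_\ell\sqrt{V_\ell C_\ell}=O(T)$; squaring gives $\mathrm{C}_{com}=O(\varepsilon^{-2}T^2)=O(\varepsilon^{-2}|\log\varepsilon|^2)$. I would emphasize that the two logarithmic factors have distinct origins: one power of $T$ comes from the linear-in-$T$ variance of Theorem \ref{thm: level variance}, the other from the $T$-proportional per-path cost.

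The two comparison bounds follow from the same template with different variance inputs. For standard Monte Carlo the uniform moment bounds give a $T$-independent variance $V_0=O(1)$, so $N=O(\varepsilon^{-2})$ samples each costing $O(T/h)=O(\varepsilon^{-1}|\log\varepsilon|)$ give $\mathrm{C}_{std}=O(\varepsilon^{-3}|\log\varepsilon|)$. For standard MLMC the correction variance is no longer uniformly bounded: without contractivity the fine and coarse paths separate at the Lyapunov rate, so that $V_\ell\le\min\!\big(V_{\max},\,C\,h_\ell^2\,\e^{\kappa T}\big)$. The capped profile makes the cost dominated by the crossover level $\ell_0$ at which $h_{\ell_0}^2\e^{\kappa T}\approx V_{\max}$, i.e. $h_{\ell_0}\approx\e^{-\kappa T/2}$; carrying out the same $\sqrt{V_\ell C_\ell}$ optimization gives $\sum_\ell\sqrt{V_\ell C_\ell}=O(\e^{\kappa T/4}\sqrt{T})$, hence $\mathrm{C}_{mlmc}=O(\varepsilon^{-2}\e^{\kappa T/2}T)=O(\varepsilon^{-2-\kappa/(2\lambda^*)}|\log\varepsilon|)$ after substituting $\e^{\kappa T/2}=\varepsilon^{-\kappa/(2\lambda^*)}$. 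The hypothesis $\kappa/\lambda^*<2$ is exactly what keeps this exponent below $3$, so that MLMC retains an advantage over standard Monte Carlo.

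\textbf{The main obstacle.} For the new scheme the essential analytic work is already packaged in Theorem \ref{thm: level variance}, so its cost bound is careful bookkeeping of the Lagrangian optimization with $T$-dependent constants. The genuinely delicate step is the standard-MLMC bound, because the exponential-in-$T$ variance growth is not covered by Theorems \ref{Theorem: stability of path}--\ref{thm: level variance} (which all concern the change-of-measure scheme) and must be justified separately in terms of the Lyapunov exponent $\kappa$, together with a correct treatment of the $\min$ cap and the dominant crossover level. I would also check throughout that the admissibility constraint $h<\min(C_{(1)},C_{(2)}/(Tp^2))$ from the cited per-level theorems remains satisfied at every level once $T=O(|\log\varepsilon|)$ and $p=2$, so that those estimates may legitimately be invoked.
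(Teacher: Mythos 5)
Your proposal is correct and follows essentially the same route as the paper: the same split of the MSE into estimator variance, weak discretization error and invariant-measure truncation error, the same choices $T=\frac{1}{\lambda^*}\log(\varepsilon^{-1})+O(1)$ and $h_L=O(\varepsilon)$, and the Giles complexity bound $\varepsilon^{-2}\bigl(\sum_\ell\sqrt{V_\ell \mathrm{C}_\ell}\bigr)^2$ fed with the three variance regimes ($V_0=O(1)$ for standard MC, $V_\ell\lesssim h_\ell^2\e^{\kappa T}$ for standard MLMC, $V_\ell\lesssim T h_\ell^2$ from Theorem \ref{thm: level variance} for the new scheme). Two execution details differ. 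For standard MLMC you argue via the capped profile $\min(V_{\max},C h_\ell^2\e^{\kappa T})$ and a dominant crossover level, whereas the paper imposes the good-coupling condition $\mathrm{C}_0V_0>\mathrm{C}_1V_1$, which forces $h_0=\vartheta_1\e^{-\kappa T/2}$ as in \eqref{eq: l_0 std} and makes the cost $O(\varepsilon^{-2}\mathrm{C}_0)$; both give $O(\varepsilon^{-2-\kappa/(2\lambda^*)}|\log\varepsilon|)$. For the new scheme, however, your bookkeeping with $h_0=O(1)$ is inconsistent with the admissibility check you flag at the end: Theorem \ref{thm: level variance} only delivers $V_\ell\lesssim Th_\ell^2$ when $h_\ell<C_{(2)}/(Tp^2)$, so already the coarsest coupled level forces $h_0=O(T^{-1})$ --- exactly the paper's ``tighter condition'' $h_0=\vartheta_4 T^{-1}$. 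Once this is enforced, the sum $\sum_\ell\sqrt{V_\ell \mathrm{C}_\ell}$ is dominated by level $0$ with $\mathrm{C}_0=T/h_0=O(T^2)$, so both logarithmic factors originate in the level-0 cost rather than one from the linear-in-$T$ variance and one from the per-path cost as you state; your final bound $O(\varepsilon^{-2}|\log\varepsilon|^2)$ is nevertheless unchanged, so this is a matter of carrying through the constraint you correctly identified, not a flaw in the result.
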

\vspace{-3em}

\begin{proof}
By Jensen's inequality, the MSE can be decomposed into three parts:
\begin{eqnarray*}
\EE\left[(\widehat{\varphi}-\pi(\varphi))^2\right]&=& \mathbb{V}\left[\widehat{\varphi}\right]+ \left|\EE\left[\widehat{\varphi}\right]-\pi(\varphi)\right|^2\\
&\leq& \mathbb{V}\left[\widehat{\varphi}\right]+ 2\left| \EE\left[\widehat{\varphi}\right]\!-\EE\left[\varphi(X_{T})\right]\right|^2\!\!+ 2\left|\EE\left[\varphi(X_{T})\right]-\pi(\varphi)\right|^2,
\end{eqnarray*}
which enables us to achieve the MSE bound by bounding each part 
by $\varepsilon^2/3$. Similar to \eqref{eq:log T}, we bound the third part by setting 
\begin{equation}
T\,=\, \frac{1}{\lambda^*} \log (\varepsilon^{-1})+\frac{\log \sqrt{6}\mu^*}{\lambda^*},
\label{eq: log T 2}
\end{equation}
to bound the truncation error. The first order weak convergence requires $h_L = O(\varepsilon)$ and
$L\geq\left\lceil \gamma \log_2 (\varepsilon^{-1})+\zeta \right\rceil $ for some $\gamma,\zeta>0.$

For the standard Monte Carlo method using $h_L,$ the computational cost for each path is $O(\varepsilon^{-1}|\log \varepsilon|)$ and the bound on variance requires $O(\varepsilon^{-2})$ samples, which gives a total computational cost
\[
\mathrm{C}_{std} \leq\ c_1 \ \varepsilon^{-3}|\log \varepsilon|,
\]
for some constant $c_1>0.$

The analysis for the two MLMC schemes is similar to the MLMC theorem in \cite{giles15} and shows the optimal computational cost is bounded by
\[
3\varepsilon^{-2}\, \left(\sum_{\ell=0}^{L} \sqrt{V_\ell\, \mathrm{C}_\ell} \right)^2\ + \ \sum_{\ell=0}^L \mathrm{C}_\ell\ ,
\]
where $\mathrm{C}_\ell$ and $V_\ell$ are the cost and variance for each level.

For standard MLMC, we have first order weak convergence but the variance of $V_\ell$ for $\ell\geq1$ increases exponentially in $T$, which gives
\begin{equation}
 V_\ell \leq\ \eta_1\, (h_0\, 2^{-\ell})^2\,\e^{\kappa T},
 \label{eq:V_l}
\end{equation}
for some constant $\eta_1>0.$ A good MLMC coupling requires
$\mathrm{C}_0 V_0 > \mathrm{C}_1 V_1,$ and given this condition and $\beta=2,\ \gamma=1,$ the optimal cost is $O(\varepsilon^{-2}\mathrm{C}_0).$ 
The condition $\mathrm{C}_0 V_0 > \mathrm{C}_1 V_1$ requires
\begin{equation}
h_0 = \vartheta_1\, \e^{-\kappa T/2}\ \ \Rightarrow\ \ \mathrm{C}_0= \vartheta_2\, \varepsilon^{-\frac{\kappa}{2\lambda^*}}|\log \varepsilon|,
\label{eq: l_0 std}
\end{equation}
for some $\vartheta_1, \vartheta_2>0.$ The condition $\kappa/\lambda^*<2$ ensures that $h_0$ is greater than the timestep required by the standard Monte Carlo method so additional MLMC levels are required to achieve the desired weak convergence. Therefore, there exists a constant $c_2$ such that
\[\mathrm{C}_{mlmc} \leq\ c_2 \ \varepsilon^{-2-\frac{\kappa}{2\lambda^*}} |\log\varepsilon|.
\]

For the new MLMC with the change of measure, Theorem \ref{thm: level variance} gives
\begin{equation}
 V_\ell \leq\ \eta_2\, (h_0\, 2^{-\ell})^2\,T,
 \label{eq:V_l new}
\end{equation}
for some $\eta_2>0.$ The condition $\mathrm{C}_0 V_0 > \mathrm{C}_1 V_1$ requires
\begin{equation}
h_0 = \vartheta_3\, T^{-1/2},\
\label{eq: l_0 new}
\end{equation}
for some $\vartheta_3>0,$ but the bound in Theorem \ref{thm: level variance} requires the tighter condition
\begin{equation*}
h_0 = \vartheta_4\, T^{-1}\ \ \Rightarrow\ \ \mathrm{C}_0 = \vartheta_5\, |\log \varepsilon|^{2},
\end{equation*}
for some $\vartheta_4, \vartheta_5>0$. Therefore, there exists a constant $c_3$ such that
\[
\mathrm{C}_{com} \leq\ c_3\, \varepsilon^{-2}|\log \varepsilon|^{2}.
\]
\end{proof}

\section{Numerical Results}
\label{Numerical section}
In this section, we present the numerical results for a Lipschitz version of the stochastic Lorenz equation with additive noise:
\begin{equation*}
\begin{aligned}
&f\begin{pmatrix}
x_1\\
x_2\\
x_3
\end{pmatrix}
=\begin{pmatrix}
10(B(x_2)-x_1)\\
(28-x_3) B(x_1)-x_2\\
B(x_1)x_2-\frac{8}{3}x_3
\end{pmatrix}.
\end{aligned}
\end{equation*}
where $B(x)=65x/\max(65,|x|).$
When $|x_1|>65$ and $|x_2|>65,$ we have
\begin{equation*}
\begin{aligned}
&f\begin{pmatrix}
x_1\\
x_2\\
x_3
\end{pmatrix}
=\begin{pmatrix}
650\,\text{sgn}(x_2)-10x_1\\
65\,\text{sgn}(x_1)(28-x_3)-x_2\\
65\,\text{sgn}(x_1)x_2-\frac{8}{3}x_3
\end{pmatrix}.
\end{aligned}
\end{equation*}
Therefore, $f$ satisfy the Lipschitz condition (\ref{Lipschitz condition}) and the dissipativity condition (\ref{eq:dissipativity}). In the region of $|x_1|\leq 65$ and $|x_2|\leq 65,$ which contains the chaotic attractors, this function will retain the chaotic property of the original Lorenz equation. Our interest is to compute $\pi(\varphi),$ where $\varphi(x)= \|x\|$ satisfying the Lipschitz condition. We run 10000 sample paths from $T=0$ to $20$ to get the following results. 

Figure \ref{variancell} is a semi-log plot of the variance on each level as a function of $T$ without the change of measure. The blue to red lines correspond to the variance on each level $\ell=1$ to $8$ with $h_\ell=2^{-\ell}h_0$ and $h_0=2^{-9}:$
$$\mathbb{V}\left[\varphi(\hX^c_T) - \varphi(\hX^f_T)\right] \sim \eta_1\,h_\ell^2\, \e^{\kappa T},$$ 
for some $\eta_1>0,$ which increases exponentially with respect to $T$ and stops increasing when it reaches the decoupling upper bound $\mathbb{V}\left[\varphi(\hX^c_t)\right]+ \mathbb{V}\left[\varphi(\hX^f_t)\right]$ shown in yellow to green lines. In addition, as level increases, the variance decreases at rate $2.$ For $T>10,$ we can see that the standard MLMC on level $\ell=8,$ using $h=2^{-17},$ still can not achieve a good coupling.
\begin{figure}[h]
\includegraphics[width=1\textwidth]{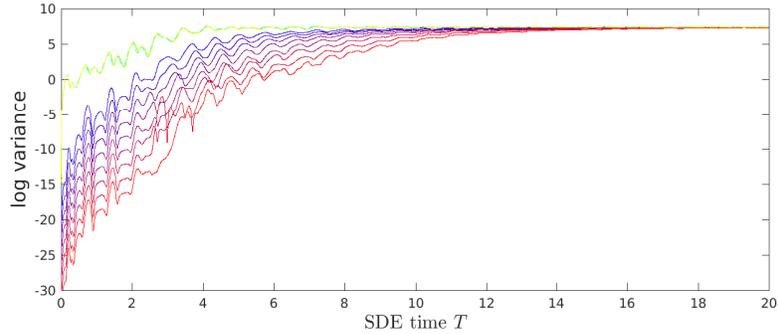}
\caption{Variance for each level without change of measure}\label{variancell}
\end{figure}
In order to see this exponential increase, we plot the log variance on level $8$ using $h=2^{-17}$ with respect to $T$ and the fitted linear function on time interval $[5,10]$, see Figure \ref{varianceLinlog}. The $\kappa$ we fit is $1.36.$ 

\begin{figure}[h]
\subfigure[Linear increase of log variance without change of measure]{
\includegraphics[width=0.45\textwidth]{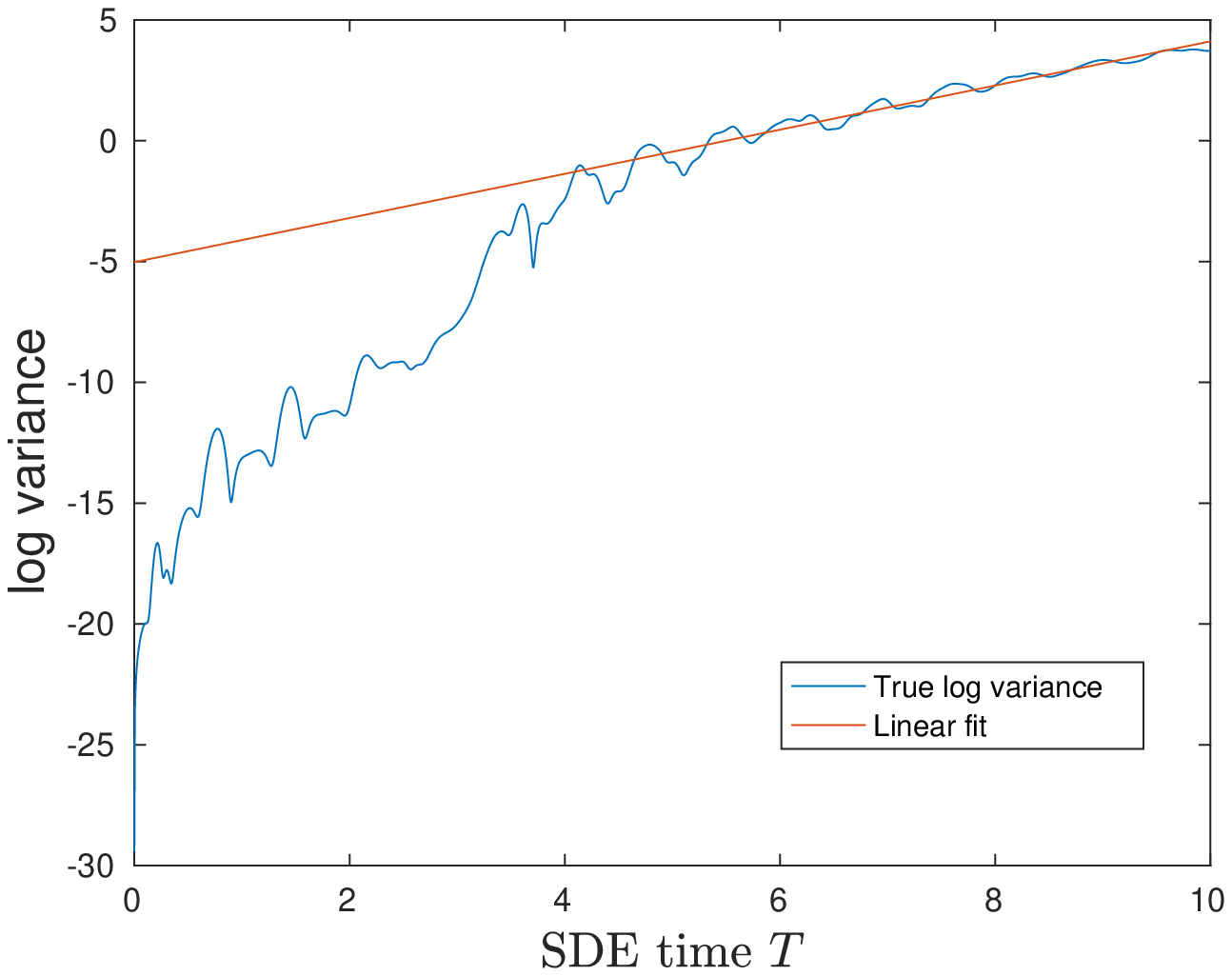}
\label{varianceLinlog}
}
\subfigure[Linear increase of variance with change of measure]{
\includegraphics[width=0.45\textwidth]{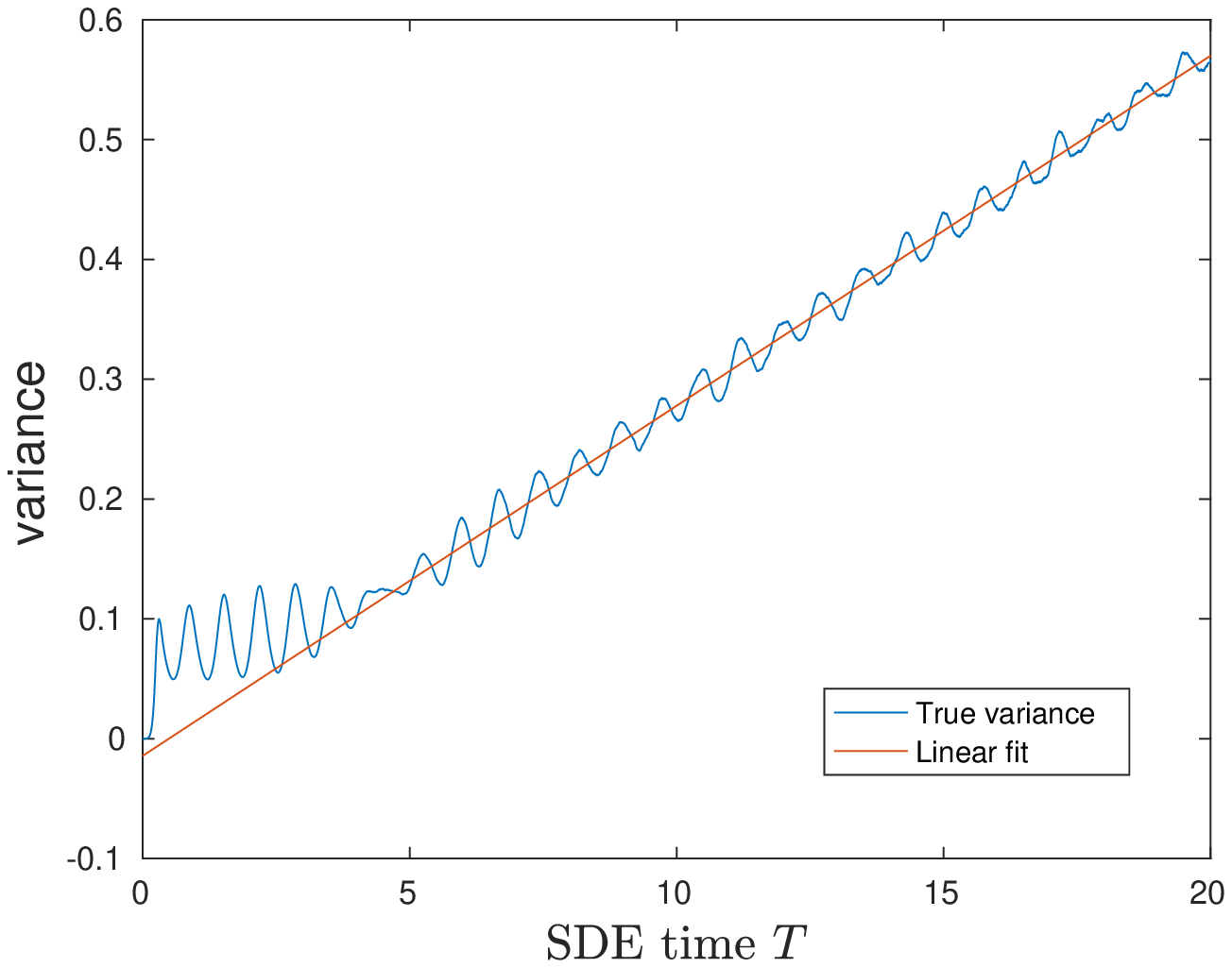}
\label{varianceLinCOM}
}
\caption{Variance on level $8$ with/without change of measure}
\end{figure}

Similarly, for the new MLMC with spring term $S=10,$ Figure \ref{varianceLin} is the semi-log plot of the variance on each level as a function of $T$ with change of measure using same $h_\ell$: 
$$\mathbb{V}\left[\varphi(\hY_T^f)\Rr^f_T-\varphi(\hY_T^c)\Rr^c_T\right] \sim \eta_2\, h_\ell^2\,T ,$$ 
for some $\eta_2>0.$
As the level increases, the variance decreases at a rate $2.$
\begin{figure}[h]
\begin{center}
\includegraphics[width=1\textwidth]{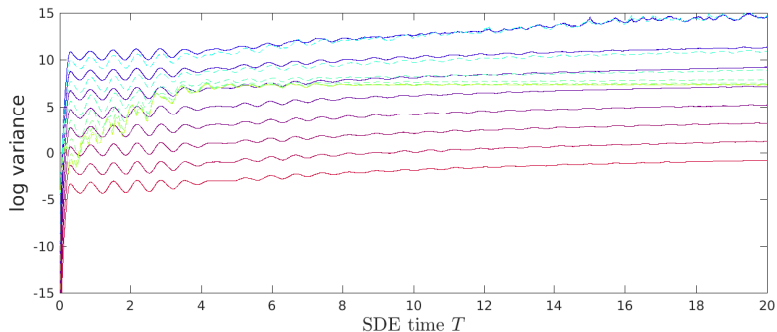}
\end{center}
\caption{Variance for each level with change of measure}
\label{varianceLin}
\end{figure}
In order to see the linear increase in $T$, we plot the variance on level $\ell=8$ with respect to $T$ and the fitted linear function on time interval $[5,20]$, see figure \ref{varianceLinCOM}.

We have investigated and illustrated the dependence of $V_\ell$ on $T$ for both schemes. Next, we investigate the impact of this increase on MLMC schemes, that is the requirement of $h_0$ to achieve a good coupling, that is $V_0>2V_1.$ We plot $\log h_0$  with respect to $T$ in figure \ref{L0T}. The blue line confirms the exponential decrease of $h_0$ with respect to $T$ in \eqref{eq: l_0 std}. The coefficient of the log function fit is $0.49$ which confirms the relationship \eqref{eq: l_0 new}.
\begin{figure}[h]
\center
\includegraphics[width=0.5\textwidth]{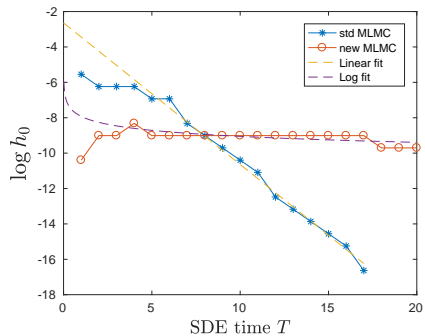}
\caption{The required $h_0$ to achieve a good coupling}
\label{L0T}
\end{figure}

Lastly, we estimate the convergence rate $\lambda^*$ to the invariant measure. Fig \ref{value} plots the function value $\varphi(X_t)$ with respect to time $t$ and its moving upper bound and lower bound. We plot the error bound (the difference between moving upper bound and moving lower bound) in fig \ref{error} and the exponential fit. The fitted $\lambda^*$ is $0.1741.$ Therefore, in this case with $\lambda^*=0.1741$ and $\kappa = 1.3601,$ the standard MLMC fails to achieve any computational savings by Theorem \ref{thm: MLMC}.
\begin{figure}[H]
\subfigure[$ \EE \  \varphi(X_t) $]{
\label{value}
\includegraphics[width=0.45\textwidth]{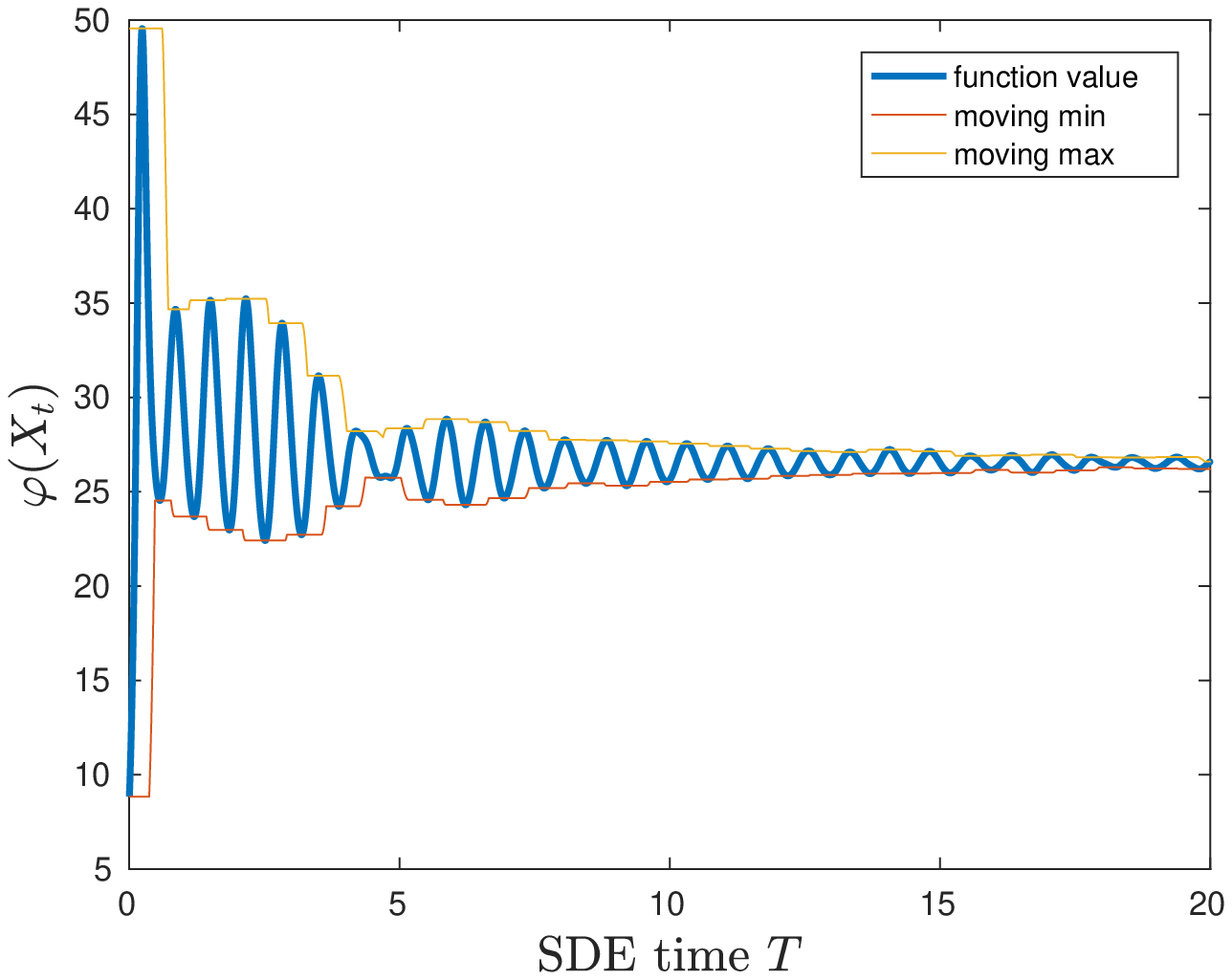}
}
\subfigure[Error bound]{
\label{error}
\includegraphics[width=0.45\textwidth]{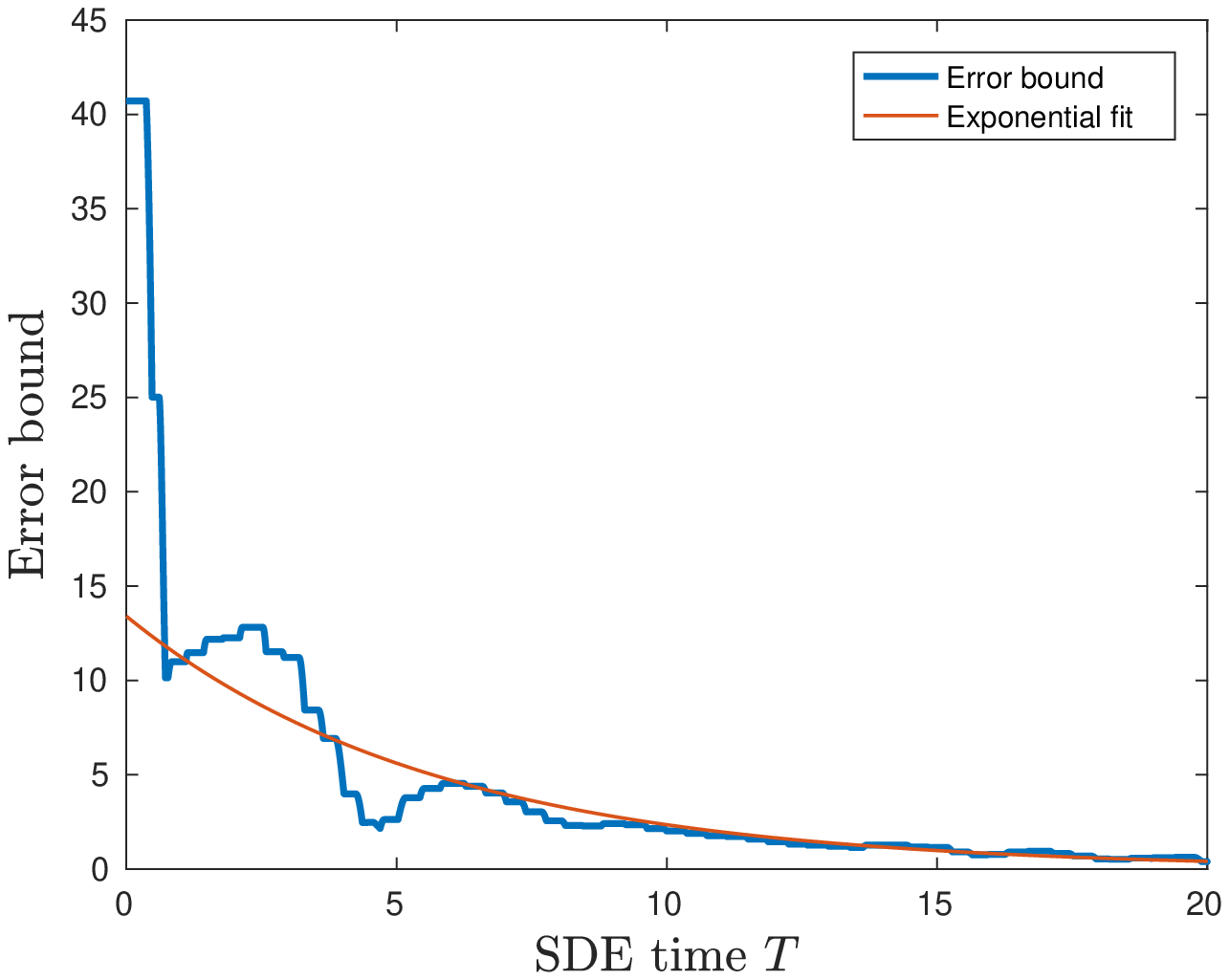}
}
\caption{Estimation of the convergence rate to invariant distribution}
\end{figure}

\section{Extension to non-Lipschitz SDEs}
\label{Non-Lipschitz section}
In this section, we extend this change of measure technique to ergodic SDEs with non-Lipschitz drift using the adaptive timestepping method proposed in \cite{Part2}. Without any proof, we show some numerical experiments results for the SDE with a double-well potential energy and the stochastic Lorenz equation.
\subsection{Double-well potential energy} We consider
\begin{equation}
\D X_t = (2X_t - \frac{1}{2}X_t^3)\,\D t + \D W_t.
\label{DWsde}
\end{equation}
The probability density function of its invariant distribution is
\[\frac{\exp(2x^2-\frac{1}{4}x^4)}{\int_{-\infty}^{\infty} \exp(2x^2-\frac{1}{4}x^4) \D x},\]
and it has two different wells at $x=\pm 2.$
%, see Figure \ref{DWPDF}.
%\begin{figure}[H]
%\center
%\includegraphics[width=0.5\textwidth]{PDF_DoubleWell}\label{DWPDF}
%\caption{Probability density function of invariant distribution}
%\end{figure}
This SDE satisfies the dissipativity condition (\ref{eq:dissipativity}) and one-sided Lipschitz condition (\ref{eq:one_sided_Lipschitz3}) with $\lambda=2$ but the drift is non-globally Lipschitz. For the standard MLMC scheme, the issue is that the fine and coarse paths may diverge to different wells, which can result in a large variance and high kurtosis. Using the change of measure technique can reduce the divergence and then improve the efficiency.

We simulate the SDE with initial value $x_0=0$ to time $T=5,$ and use the adaptive function:
\[
h^\delta (x) = \frac{\max(1,|x|)}{8\max(1,|2x-\frac{1}{2}x^3|)}\ \delta,
\]
with $\delta=2^{-\ell}$ for each level $\ell.$ We compare three different schemes:
\begin{itemize}
\item standard MLMC with adaptive timestep.
\item MLMC with adaptive timestep and change of measure with constant spring coefficient $S=1.$
\item MLMC with adaptive timestep and change of measure with adaptive spring coefficient $$S=\max(0,2-1.5x^2).$$
\end{itemize}
The second scheme uses $S=1$ following the suggestion of Theorem \ref{thm: level variance}. The third scheme improve on the second by choosing adaptive $S$ and avoiding unnecessary spring term, reducing the variance without losing the control on divergence. By doing first order Taylor expansion on \eqref{adaptive S}, we choose $S=\max(0,f'(x))$ to deal with the divergence locally.

We run 10000 samples for each level $\ell$ for the three schemes. The numerical results are shown in Figure \ref{DW_sum}.
\begin{figure}[h]
\center
\includegraphics[width=0.8\textwidth]{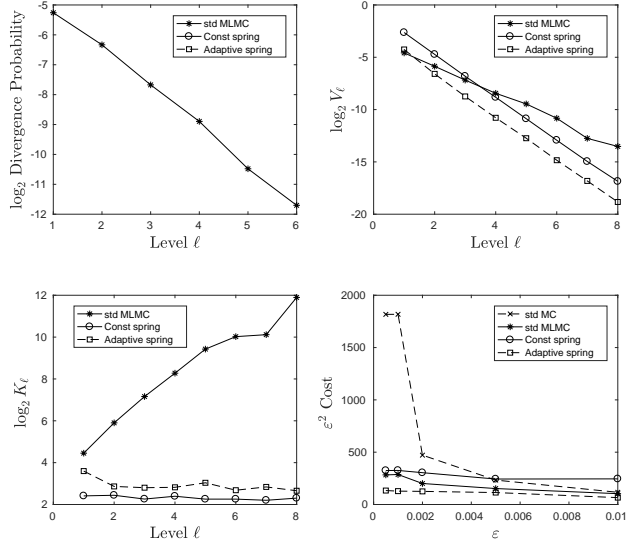}
\caption{MLMC convergence test for double-well potential energy}
\label{DW_sum}
\end{figure}

The top left figure plots the divergence probability with respect to the level $\ell,$ where the divergence probability is defined as 
\[\EE\left[ \mathbbm{1}_{\|\hX^f_T-\hX^c_T\|>1}\right]\ = \ \mathbb{P}\left[\|\hX^f_T-\hX^c_T\|>1\right].\]  The probability decreases as $\ell$ increases since the timestep $h_\ell$ is smaller and the difference between fine and coarse path decreases. The decrease rate we fit is
\[\mathbb{P}\left[\|\hX^f_T-\hX^c_T\|>1\right] \sim O(h_\ell^{1.28}).\]
The two schemes with change of measure have zero divergence on all levels. 

The top right figure plots the variance of corrections $V_\ell$ with respect to level $\ell.$ The $V_\ell$ of the two schemes with change of measure decrease at the similar rate $2$ while the standard MLMC has a slower rate of approximately $1.28$ since the divergence of the fine and coarse paths dominated the variance.
The scheme with the adaptive spring coefficient has lower $V_\ell$ than the scheme with constant spring coefficient since the unnecessary spring will increase the variance of the Radon-Nikodym derivative. 

The bottom left figure shows the log kurtosis with respect to level $\ell.$ The kurtosis of standard MLMC will increase exponentially while the kurtosis of the schemes with change of measure will stay constant. Similar intuitive explanation applies here. The divergence samples again dominate the 4th moment and then the kurtosis on each level
\[
K_\ell \sim \frac{\EE\left[ \|\hX^f_T-\hX^c_T\|^4\right] }{\EE\left[ \|\hX^f_T-\hX^c_T\|^2\right]^2} \sim h_\ell^{-1.28}.
\] 
The rate of increase in the figure is 1.06 which is quite close to the rate of decrease of the divergence probability.

The bottom right figure plots the costs of the three schemes together with the standard Monte Carlo method with respect to $\varepsilon.$ The costs of all the MLMC schemes are $O(\varepsilon^{-2})$ while the standard MC is $O(\varepsilon^{-3})$ and the scheme with adaptive spring has the lowest cost.

Overall, the new MLMC schemes with change of measure perform better especially the one with adaptive spring. They can not only keep the kurtosis constant but also reduce the variance and hence the total computational cost. 

\subsection{Stochastic Lorenz equation}
This is a three-dimensional system modelling convection rolls in the atmosphere

\begin{equation}
\begin{aligned}
&f\begin{pmatrix}
x_1\\
x_2\\
x_3
\end{pmatrix}
=\begin{pmatrix}
10(x_2-x_1)\\
x_1(28-x_3) -x_2\\
x_1x_2-\frac{8}{3}x_3
\end{pmatrix}.
\end{aligned}
\label{Lorenz sde}
\end{equation}

This SDE does not satisfy the dissipativity condition (\ref{eq:dissipativity}) and one-sided Lipschitz condition (\ref{eq:one_sided_Lipschitz3}), and is more chaotic compared with the truncated Lipschitz version in previous section.

We simulate the SDE with initial value $x_0=[0,0,0]$ to time $T=10,$ and use the adaptive function:
\[
h^\delta(x) = \frac{\max(100,\|x\|^2)}{2^{11}\max(100,\|f(x)\|^2)}\delta,
\]
with $\delta = 2^{-\ell}$ for each level $\ell.$  We compare two different schemes:
\begin{itemize}
\item standard MLMC with adaptive timestep.
\item MLMC with adaptive timestep and change of measure with constant spring coefficient $S=10.$
\end{itemize}
A possible third scheme is the scheme with adaptive spring which requires us to calculate the largest positive eigenvalue of the Jacobian matrix $\frac{\partial f}{\partial x}.$

We run 10000 samples for each level $\ell$ for two schemes. The numerical results are shown in the figure \ref{Lorenz full}.

\begin{figure}[h]
\center
\includegraphics[width=0.8\textwidth]{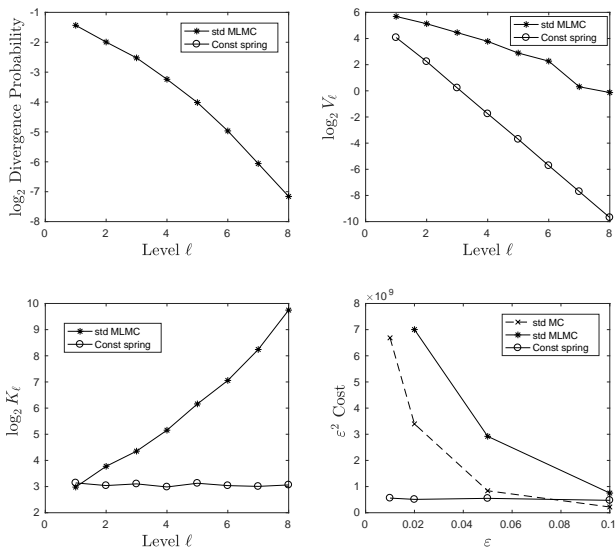}\label{Lorenz full}
\caption{MLMC convergence test for Lorenz equation}
\end{figure}

Similarly, the top left figures shows that the change of measure technique can greatly reduce the ratio of divergence $\EE\left[ \mathbbm{1}_{\|\hX^f_T-\hX^c_T\|>10}\right]$ and actually no divergence occurs in this numerical experiment for all the levels. The rate of decrease for standard MLMC is $0.82.$ 
 
The top right figure illustrates the variance reduction of the change of measure technique, and the rate of decrease of the variance for level corrections $V_\ell$ is approximately 2 for change of measure and $0.83$ for standard MLMC which is similar to the rate of decrease of divergence probability.
 
The bottom left plot shows that the kurtosis of standard MLMC increases exponentially as level $\ell$ increases while the kurtosis of change of measure remains constant. The increase rate is $0.96$ which is close to the decrease rate of divergence rate.

The last bottom right plot implies that the total computational cost is $O(\varepsilon^{-2})$ for the MLMC with change of measure and $O(\varepsilon^{-3})$ for the standard Monte Carlo method. The $O(\varepsilon^{-3})$ computational cost for standard MLMC is worse than the theoretical results due to the high kurtosis and large variance $V_\ell$ and it is already quite hard to get the result for $\varepsilon=0.01$ in a reasonable computational time.

\section{Proofs}
For simplicity of the proof, we introduce the notation $a(h)\lesssim b(h)$ which means there exists a constant $\tilde{h}_0>0$ such that $a(h)\leq b(h),$ $\forall\ 0<h<\tilde{h}_0,$ where $\tilde{h}_0$ is allowed to depend on constants such as $S,$ $K,$ $\tilde{\alpha},$ $\tilde{\beta},$ $f(0)$ but not on stochastic samples $\omega$ or Brownian paths.

Note that for all $\delta>0,$ we have
$1/(1-Sh) \lesssim 1/(1-2Sh) \lesssim 1+2Sh+\delta h \lesssim 2.$

\subsection{Theorem \ref{Theorem: stability of path}}
\label{proof 1}
\begin{proof}
The proof is given for $p\!\geq\! 4$; the result for 
$1\!\leq\! p \!< \! 4$ follows from H{\"o}lder's inequality. 
%We assume $0<h<C_{(1)}$ where 
%\begin{equation}
%C_{(1)} = \min\left(\frac{1-\delta}{2S},\ \frac{\tilde{\alpha}}{2L^2+2\tilde{\alpha} S},\ \frac{2(\alpha-\gamma)}{S^2+2\alpha S +2\alpha^2}\right)
%\end{equation}
%for some fixed $\delta>0$ and $\gamma\in(0,\alpha)$ with $\alpha=\tilde{\alpha}- 2L^2h/(1-2Sh)>0$ and $\beta = \tilde{\beta} + 2h\|f(0)\|^2/(1-2Sh)>0.$
We start our proof by analyzing the numerical paths step by step.
When $t=t_0=0,$ the two numerical paths are both at initial point $x_0,$ i.e. $\hY_{t_0}^f=\hY_{t_0}^c=x_0.$

For the odd time point $t_{2n+1}$ for $n\geq 0$, 
\begin{eqnarray*}
\label{eq: odd update 1}
 \hY_{t_{2n+1}}^c & = &\hY_{t_{2n}}^c+ S(\hY_{t_{2n}}^f-\hY_{t_{2n}}^c)h + f(\hY_{t_{2n}}^c) h + \Delta W_{2n}, \\
\label{eq: odd update 2}
\hY_{t_{2n+1}}^f & = &\hY_{t_{2n}}^f+ S(\hY_{t_{2n}}^c-\hY_{t_{2n}}^f)h + f(\hY_{t_{2n}}^f) h + \Delta W_{2n}. 
\end{eqnarray*} 
Squaring both sides gives
\begin{eqnarray*}
\|\hY_{t_{2n+1}}^c\|^2 &=&\left\| Sh\,\hY_{t_{2n}}^f+(1-Sh)\left(\hY_{t_{2n}}^c + \frac{f(\hY_{t_{2n}}^c) h + \Delta W_{2n}}{1-Sh} \right)\right\|^2.
\end{eqnarray*}
Due to the convexity of $x^2$ that, for any $0\leq \xi\leq 1,$
\begin{eqnarray*}
\|\xi A+(1-\xi)B\|^2 \leq \xi \|A\|^2 +(1-\xi)\|B\|^2,
\end{eqnarray*}
provided $h<1/S,$ we can choose $\xi = Sh$ to get
\begin{eqnarray*}
\|\hY_{t_{2n+1}}^c\|^2 &\lesssim & Sh\|\hY_{t_{2n}}^f\|^2+(1-Sh)\|\hY_{t_{2n}}^c\|^2+ 4\,\|\Delta W_{2n}\|^2+2\,\langle \hY_{t_{2n}}^c, f(\hY_{t_{2n}}^c) \rangle h\\
&&+\, 4\,\|f(\hY_{t_{2n}}^c)\|^2 h^2  + 2\,\langle \hY_{t_{2n}}^c, \Delta W_{2n}\rangle. 
\end{eqnarray*}
Due to the Lipschitz condition \eqref{Lipschitz condition}, 
\[
\|f(\hY_{t_{2n}}^c)\|^2  h^2\lesssim \gamma\, h ( \| \hY_{t_{2n}}^c \|^2 + 1 )
\]
for any $\gamma>0$.  Combining this with dissipativity condition (\ref{eq:dissipativity}), we obtain, for some fixed $\alpha \in(0,\tilde{\alpha})$ and $\beta \in(\tilde{\beta},\infty),$
\begin{eqnarray}
\label{ineq: coarse path}
\|\hY_{t_{2n+1}}^c\|^2 &\lesssim & Sh\|\hY_{t_{2n}}^f\|^2+(1-Sh-2\alpha h)\|\hY_{t_{2n}}^c\|^2+ 4\,\|\Delta W_{2n}\|^2   + 2\beta h\nonumber\\
&& + 2\,\langle \hY_{t_{2n}}^c, \Delta W_{2n}\rangle.
\end{eqnarray}
Similarly, we have
\begin{eqnarray}
\label{ineq: fine path}
 \|\hY_{t_{2n+1}}^f\|^2 &\lesssim & Sh\|\hY_{t_{2n}}^c\|^2+(1-Sh-2\alpha h)\|\hY_{t_{2n}}^f\|^2+ 4\,\|\Delta W_{2n}\|^2 + 2\beta h \nonumber\\
 &&+\, 2\,\langle \hY_{t_{2n}}^f, \Delta W_{2n}\rangle .
\end{eqnarray}

For the even point $t_{2n+2}$ for $n\geq 0,$
\begin{eqnarray*}
\hY_{t_{2n+2}}^c &=&\hY_{t_{2n}}^c+ S(\hY_{t_{2n}}^f-\hY_{t_{2n}}^c)2h + f(\hY_{t_{2n}}^c) 2h + \Delta W_{2n} +\Delta W_{2n+1},\\
 \hY_{t_{2n+2}}^f &=&\hY_{t_{2n+1}}^f+ S(\hY_{t_{2n+1}}^c-\hY_{t_{2n+1}}^f)h + f(\hY_{t_{2n+1}}^f) h +\Delta W_{2n+1}.
\end{eqnarray*}
Using the same approach and choosing $\xi=2Sh$ provided $2Sh<1$, we get
\begin{eqnarray*}
\|\hY_{t_{2n+2}}^c\|^2  &\lesssim &  2Sh\|\hY_{t_{2n}}^f\|^2+(1-2Sh-4\alpha h)\|\hY_{t_{2n}}^c\|^2+4\|\Delta W_{2n}\!+\!\Delta W_{2n+1}\|^2 \\
&& + 4\beta h +\, 2\,\langle \hY_{t_{2n}}^c, \Delta W_{2n}\!+\!\Delta W_{2n+1}\rangle,
\end{eqnarray*}
and
\begin{eqnarray*}
\|\hY_{t_{2n+2}}^f\|^2 &\lesssim & Sh\|\hY_{t_{2n+1}}^c\|^2+(1-Sh-2\alpha h)\|\hY_{t_{2n+1}}^f\|^2+ 4\,\|\Delta W_{2n+1}\|^2 + 2\beta h \\
&& + 2\,\langle \hY_{t_{2n+1}}^f, \Delta W_{2n+1}\rangle .
\end{eqnarray*}

Therefore, for any fixed $\gamma\in(0,\alpha),$ we have
\begin{eqnarray*}
\|\hY_{t_{2n+2}}^c\|^2\!+\!\|\hY_{t_{2n+2}}^f\|^2 \!\!&\!\!\lesssim\!\! &\!\! (1\!-\!4\gamma h)(\|\hY_{t_{2n}}^c\|^2\! +\!\|\hY_{t_{2n}}^f\|^2)
\!+\! 12(\|\Delta W_{2n}\|^2\!+\!\|\Delta W_{2n+1}\|^2)\\
&&+8\beta h+\, 2\,\e^{-4\gamma h} \langle \phi_{t_{2n}}, \Delta W_{2n}\rangle + 2\,\e^{-2\gamma h} \langle \phi_{t_{2n+1}}, \Delta W_{2n+1}\rangle,
\end{eqnarray*}
where for $n\in[0,N/2-1],$
\begin{eqnarray*}
\e^{-4\gamma h}\phi_{t_{2n}} = (1+Sh)\hY_{t_{2n}}^c +(1-Sh-2\alpha h)\hY_{t_{2n}}^f, \ \ \
\e^{-2\gamma h}\phi_{t_{2n+1}} = \hY_{t_{2n}}^c+\hY_{t_{2n+1}}^f.
\end{eqnarray*}
Since $1-4\gamma h \leq \e^{-4\gamma h}$ and $\e^{4\gamma h}\lesssim 2,$ we multiply by $\e^{2\gamma t_{2n+2}}$ on both sides to obtain
\begin{eqnarray*}
\e^{2\gamma t_{2n+2}}(\|\hY_{t_{2n+2}}^f\|^2\!+\!\|\hY_{t_{2n+2}}^c\|^2)  &\lesssim & \e^{2\gamma t_{2n}} (\|\hY_{t_{2n}}^f\|^2\!+\!\|\hY_{t_{2n}}^c\|^2)+\,16\beta \e^{2\gamma t_{2n}} h\\
&&\!\!\!\!\!\!\!\!\!\!\!\!\!\!\!\!\!\!\!\!\!\!\!\!\!\!\!\!\!\!\!\!\!\!\!\!\!\! +24\,  \e^{2\gamma t_{2n}}(\|\Delta W_{2n}\|^2\! +\! \|\Delta W_{2n+1}\|^2)+ 2\, \e^{2\gamma t_{2n}}\langle \phi_{t_{2n}}, \Delta W_{2n}\rangle\\
&& + 2\, \e^{2\gamma t_{2n+1}}\langle \phi_{t_{2n+1}}, \Delta W_{2n+1}\rangle.
\end{eqnarray*}
Summing over multiple timesteps gives
 \begin{eqnarray} 
 \label{eq: even step}
\e^{2 \gamma  t_{2n}}(\|\hY_{t_{2n}}^f\|^2+\|\hY_{t_{2n}}^c\|^2)  &\lesssim &   (\|\hY_{t_0}^f\|^2+\|\hY_{t_0}^c\|^2) 
+24 \sum_{k=0}^{2n-1} \e^{2\gamma t_{k}}\|\Delta W_{k}\|^2 
\nonumber\\
&& \!\!\!\!\!\!\!\! +16\beta  \sum_{k=0}^{n-1}\e^{2\gamma t_{2k}} h 
+\, 2\, \sum_{k=0}^{2n-1} \e^{2\gamma t_k}\langle \phi_{t_k}, \Delta W_{k}\rangle .
\end{eqnarray}
For odd time points, combining \eqref{ineq: coarse path} and \eqref{ineq: fine path}, by Cauchy-Schwarz inequality and Young's inequality, there exist constants $\alpha_1>1,\ \beta_1>\max(1,\alpha_1\beta)$ such that
\begin{eqnarray*}
\|\hY_{t_{2n+1}}^f\|^2+\|\hY_{t_{2n+1}}^c\|^2 &\lesssim & (1-2\alpha h)(\|\hY_{t_{2n}}^f\|^2+\|\hY_{t_{2n}}^c\|^2)+ 8\,\|\Delta W_{2n}\|^2 + 4\beta h\nonumber\\
&&+\, 2\langle \hY_{t_{2n}}^c, \Delta W_{2n}\rangle  + 2\langle \hY_{t_{2n}}^f, \Delta W_{2n}\rangle \\
&&\!\!\!\!\!\!\!\!\!\!\!\!\!\!\!\!\!\!\!\! \lesssim  (1-2\gamma h) \left(\alpha_1(\|\hY_{t_{2n}}^f\|^2+\|\hY_{t_{2n}}^c\|^2 +12\,\|\Delta W_{2n}\|^2 )+ 4\, \beta_1 h\right).
\end{eqnarray*}
Multiplying by $\e^{2\gamma t_{2n+1}}$ on both sides and using the \eqref{eq: even step} gives
\begin{eqnarray}
 \label{eq: odd step}
\e^{2\gamma t_{2n+1}}(\|\hY_{t_{2n+1}}^f\|^2 + \|\hY_{t_{2n+1}}^c\|^2) &\lesssim & \alpha_1  (\|\hY_{t_0}^f\|^2 + \|\hY_{t_0}^c\|^2) +  24\,\alpha_1  \sum_{k=0}^{2n} \e^{2\gamma t_{k}}\|\Delta W_{k}\|^2 \nonumber\\
 &&\!\!\!\!\!\!\!\!\!\!\!\!\!\!+  16\,\beta_1 \sum_{k=0}^{n}\e^{2\gamma t_{2k}} h 
+ 2\,\alpha_1 \sum_{k=0}^{2n-1} \e^{2\gamma t_k}\langle \phi_{t_k}, \Delta W_{k}\rangle .
\end{eqnarray}
Then, combing \eqref{eq: even step} and \eqref{eq: odd step}, raising both sides to power $p/2$, taking the supremum over $n\in[0,N]$ and taking expectation on the both sides, by using Jensen's inequality, we have
\begin{eqnarray*}
\EE\left[\sup_{0\leq n\leq N} \e^{\gamma p t_{n}}\left(\|\hY^f_{t_{n}}\|^2+\|\hY^c_{t_{n}}\|^2\right)^{p/2}\right]  & \lesssim &   4^{p/2-1}(24\,\alpha_1\beta_1)^{p/2} (I_1 + I_2 + I_3 +I_4 ),
\end{eqnarray*}
where
\begin{eqnarray*}
I_1 &=& \EE\left[(\|\hY_{t_0}^f\|^2+\|\hY_{t_0}^c\|^2)^{p/2} \right] = 2^{p/2} \|x_0\|^{p/2},\ \ \ \ \ 
I_2 =\ \left|\  \sum_{k=0}^{N-1}\e^{2\gamma t_{k}} h\ \right|^{p/2},\\
I_3&= &\EE\left[\left| \sum_{k=0}^{N-1} \e^{2\gamma t_{k}} \|\Delta W_{k}\|^2 \right|^{p/2} \right],\ 
I_4 = \EE\left[\ \sup_{1\leq n \leq N/2} \left|  \sum_{k=0}^{2n-1} \e^{2\gamma t_k}\langle \phi_{t_k}, \Delta W_{k}\rangle\right|^{p/2} \right].
\end{eqnarray*}
We will bound these four parts separately. $I_1$ is a constant.
For $I_2,$ we have
\begin{eqnarray*}
I_2& \leq & \left|\  \int_0^T \e^{2\gamma t}\ \D t\right|^{p/2} \leq 
 \e^{\gamma p T} / (2\gamma)^{p/2}.
\end{eqnarray*}
Next, if $q_k$, for  $k=1,\ldots,n$ is an arbitrary discrete probability distribution, and $b_k$, for $ k=1,\ldots,n$ is a set of scalar values, then for any $p>1$
Jensen's inequality gives
\[ \left| \sum_{k=1}^n q_k b_k \right|^p \leq \sum_{k=1}^n
q_k |b_k|^p.\]
If $a_k, k=1,\ldots,n$ is a set of positive scalar values,
then setting $q_k = a_k / \sum_{k'=1}^n a_{k'} $ gives
\[
\left| \sum_{k=1}^n a_k b_k \right|^p \leq \left|\sum_{k=1}^n a_k \right|^{p-1} \sum_{k=1}^n
a_k |b_k|^p.
\]
For $I_3,$ using this inequality, we obtain
\begin{eqnarray*}
I_3 &=&\EE\left[\ \left|  \sum_{k=0}^{N-1} \e^{2\gamma t_{k}}h\frac{\|\Delta W_{k}\|^2}{h}\right|^{p/2} \right]
\leq  \left|\sum_{k=0}^{N-1} \e^{2\gamma t_{k}}h\right|^{p/2-1}\EE\left[ \sum_{k=0}^{N-1} \e^{2\gamma t_{k}}h \frac{\|\Delta W_{k}\|^p}{h^{p/2}}\right] \\
&\leq& c_p\,\e^{\gamma p T} / (2\gamma)^{p/2}.
\end{eqnarray*}
where $c_p$ is defined by 
$c_p=\EE\left[\|\Delta W_{k}\|^p/h^{p/2}\right]\leq d^{p/2}\,p!! \leq d^{p/2}p^{p/2}.$\\
For $I_4,$ we rewrite the summation as an It{\^o} integral and then by the Burkholder-Davis-Gundy inequality in \cite{BY82}, there exists a positive constant $C_{\scaleto{\mathrm{BDG}}{4pt}}$ independent of $p$ such that
\begin{eqnarray*}
I_4 &\leq & \EE\left[\sup_{0\leq t \leq T} \left| \int_0^t \e^{2\gamma \floor{s/h}h}\langle \phi_{\floor{s/h}h}, \D  W_{s}\rangle \right|^{p/2}\right]\\
&\leq & (C_{\scaleto{\mathrm{BDG}}{4pt}}\,p)^{p/4}\ \EE\left[\ \left| \sum_{k=0}^{N-1} \e^{4\gamma t_k}\|\phi_{t_k}\|^2h\right|^{p/4}\ \right],
\end{eqnarray*}
where, by Young's inequality,
\begin{eqnarray*}
\|\phi_{t_{2k}}\|^2 
\lesssim  8(\|\hY_{t_{2k}}^c\|^2+\|\hY_{t_{2k}}^f\|^2),\ \ \ \ \ 
\|\phi_{t_{2k+1}}\|^2
\lesssim  4\,(\|\hY_{t_{2k}}^c\|^2  + \|\hY_{t_{2k+1}}^f\|^2).
\end{eqnarray*}
Then by Jensen's inequality and Young's inequality, for arbitrary $\zeta>0,$ we have
\begin{eqnarray*}
I_4 \!\!\!\!&\leq & \!\!\!\!(12 C_{\scaleto{\mathrm{BDG}}{4pt}}p)^{p/4}\left|\sum_{k=0}^{N-1}\e^{2\gamma t_{k}} h \right|^{p/4-1}\!\!\!\!\EE \left[\sum_{k=0}^{N-1}\e^{2\gamma t_{k}} h \!\sup_{0\leq n\leq N} \e^{\gamma p t_{n}/2} (\|\hY_{t_{n}}^c\|^2 \! +\! \|\hY_{t_{n}}^f\|^2)^{p/4} \right]\\
&\leq &\!\!\!\! \frac{1}{4\zeta}\, \EE\left[\sup_{0\leq n\leq N} \e^{\gamma p t_{n}}\left(\|\hY^f_{t_{n}}\|^2\! +\! \|\hY^c_{t_{n}}\|^2\right)^{p/2}\right] + \zeta\,\left(\frac{6\,C_{\scaleto{\mathrm{BDG}}{4pt}}}{\gamma}\right)^{p/2} p^{p/2}\e^{\gamma p T}.
\end{eqnarray*}

Finally, combining all the estimates above and choosing $\zeta=4^{p/2-1}(24\alpha_1\beta_1)^{p/2}  ,$ there exist a constants $C_{(2)}>0$ such that 
\begin{eqnarray*}
\EE\left[\sup_{0\leq n\leq N} \e^{\gamma p t_{n}}\left(\|\hY^f_{t_{n}}\|^2 \!\!+\!\!\|\hY^c_{t_{n}}\|^2\right)^{p/2}\right] &\lesssim &  C_{(2)}^p\, p^{p/2} \e^{\gamma p T},
\end{eqnarray*}
which implies that there exists constant $C_{(1)}>0$ such that, for any $ 0<h\leq C_{(1)},$
\[
\sup_{0\leq n \leq N} \EE\left[ \| \widehat{Y}^f_{t_n} \|^p \right] 
\leq C_{(2)}^p\, p^{p/2}, ~~
\sup_{0\leq n \leq N} \EE\left[ \| \widehat{Y}^c_{t_n} \|^p \right] 
\leq C_{(2)}^p\, p^{p/2}.
\]  
\end{proof}

\subsection{Theorem \ref{thm: strong error}}
\label{proof 2}
\begin{proof}
The proof is given for $p\!\geq\! 4$; the result for 
$1\!\leq\! p \!< \! 4$ follows from H{\"o}lder's inequality.

The different updates on odd and even time points give
\begin{eqnarray*}
\hY_{t_{2n+1}}^f\!-\!\hY_{t_{2n+1}}^c \!\!\!\!&=&\!\!\!\! (1-2Sh)(\hY_{t_{2n}}^f- \hY_{t_{2n}}^c)+ (f(\hY_{t_{2n}}^f) - f(\hY_{t_{2n}}^c))h,\\
\hY_{t_{2n+2}}^f\!-\! \hY_{t_{2n+2}}^c \!\!\!\!&=&\!\!\!\! (1\!-\!Sh)(\hY_{t_{2n+1}}^f \!-\!\hY_{t_{2n+1}}^c)-Sh(\hY_{t_{2n}}^f\!-\!\hY_{t_{2n}}^c)+(f(\hY_{t_{2n+1}}^f)\!-\! f(\hY_{t_{2n}}^c))h,
\end{eqnarray*}
%Taking square on both sides gives
%\begin{eqnarray*}
%\|\hY_{t_{2n+1}}^f-\hY_{t_{2n+1}}^c\|^2 &=& (1-2Sh)^2\|\hY_{t_{2n}}^f- \hY_{t_{2n}}^c\|^2+\|f(\hY_{t_{2n}}^f) - f(\hY_{t_{2n}}^c)\|^2h^2 \\
%&& + 2(1-2Sh)h\langle \hY_{t_{2n}}^f- \hY_{t_{2n}}^c,  f(\hY_{t_{2n}}^f) - f(\hY_{t_{2n}}^c) \rangle,
%\end{eqnarray*}
%and by Assumption \ref{assp:enhanced_one_sided_Lipschitz} and Lipschitz condition (\ref{Lipschitz condition}), we get
%\begin{eqnarray*}
%\label{error:1}
%\|\hY_{t_{2n+1}}^f-\hY_{t_{2n+1}}^c\|^2 &\leq & (1+2(\lambda-2S)h+(\lambda^2-4S(\lambda-S))h^2)\|\hY_{t_{2n}}^f- \hY_{t_{2n}}^c\|^2.
%\end{eqnarray*}
%Similarly, 
and then
\begin{eqnarray*}
\hY_{t_{2n+2}}^f\! - \! \hY_{t_{2n+2}}^c
 \!\!\!\!&=&\!\!\!\!  (1-4Sh+2S^2h^2)(\hY_{t_{2n}}^f- \hY_{t_{2n}}^c) + (1-Sh)(f(\hY_{t_{2n}}^f) - f(\hY_{t_{2n}}^c))h \\
&& +\ (f(\hY_{t_{2n+1}}^f)- f(\hY_{t_{2n}}^f))h.
\end{eqnarray*}
Taking the square of both sides gives
\begin{eqnarray*}
\|\hY_{t_{2n+2}}^f- \hY_{t_{2n+2}}^c\|^2  &=&  (1-4Sh+2S^2h^2)^2 \|\hY_{t_{2n}}^f- \hY_{t_{2n}}^c\|^2 \\
&&\!\!\!\!\!\!\!\!\!\!\!\!\!\!\!\! +(1-Sh)^2\|f(\hY_{t_{2n}}^f) - f(\hY_{t_{2n}}^c)\|^2h^2 +\|f(\hY_{t_{2n+1}}^f)- f(\hY_{t_{2n}}^f)\|^2 h^2\\
&&\!\!\!\!\!\!\!\!\!\!\!\!\!\!\!\!\!\! + 2(1-4Sh+2S^2h^2)(1-Sh)\langle \hY_{t_{2n}}^f- \hY_{t_{2n}}^c, f(\hY_{t_{2n}}^f) - f(\hY_{t_{2n}}^c) \rangle h\\
&& +2(1-4Sh+2S^2h^2) \langle \hY_{t_{2n}}^f- \hY_{t_{2n}}^c,f(\hY_{t_{2n+1}}^f)- f(\hY_{t_{2n}}^f) \rangle h
\\
&& +2(1-Sh) \langle f(\hY_{t_{2n}}^f) - f(\hY_{t_{2n}}^c), f(\hY_{t_{2n+1}}^f)- f(\hY_{t_{2n}}^f) \rangle h^2.
\end{eqnarray*}
Then provided $S>\lambda/2$, the Assumption \ref{assp:enhanced_one_sided_Lipschitz}, Lipschitz condition (\ref{Lipschitz condition}), Cauchy-Schwarz inequality and Young's inequality imply,  for any fixed $\gamma\in(0,2S-\lambda),$ 
\begin{eqnarray}
\label{error:2}
\|\hY_{t_{2n+2}}^f- \hY_{t_{2n+2}}^c\|^2 
&\lesssim& (1-4\gamma h) \|\hY_{t_{2n}}^f- \hY_{t_{2n}}^c\|^2 +2K^2\|\hY_{t_{2n+1}}^f- \hY_{t_{2n}}^f\|^2 h^2\nonumber\\
&& \!\!\!\!\!\!\!\!\!\!\!\!\!\!\!\!\!\!\!\!\!\!\!\!\!\!\!\!\!+\,2(1-4Sh+2S^2h^2) \langle \hY_{t_{2n}}^f- \hY_{t_{2n}}^c,f(\hY_{t_{2n+1}}^f)- f(\hY_{t_{2n}}^f) \rangle h.
\end{eqnarray}
Following this estimate, we use two different approach to get different upper bounds.

\textbf{First}, we continue to use Young's inequality and  Lipschitz condition (\ref{Lipschitz condition}) to get
\begin{eqnarray*}
\|\hY_{t_{2n+2}}^f- \hY_{t_{2n+2}}^c\|^2 
\lesssim (1-2\gamma h) \|\hY_{t_{2n}}^f- \hY_{t_{2n}}^c\|^2 +\left((2\gamma)^{-1}+2\right) K^2\|\hY_{t_{2n+1}}^f- \hY_{t_{2n}}^f\|^2 h.
\end{eqnarray*}
Then  we multiply by $\e^{\gamma t_{2n+2}}$ on both sides and $\e^{2\gamma h}\lesssim 2$ gives
\begin{eqnarray*}
\e^{\gamma t_{2n+2}}\|\hY_{t_{2n+2}}^f- \hY_{t_{2n+2}}^c\|^2 \!\!\!\!&\lesssim& \!\!\!\! \e^{\gamma t_{2n}} \|\hY_{t_{2n}}^f \!-\! \hY_{t_{2n}}^c\|^2 +(\gamma^{-1}\!+\!4)K^2\,\e^{\gamma t_{2n}}\|\hY_{t_{2n+1}}^f- \hY_{t_{2n}}^f\|^2 h.
\end{eqnarray*}
Summing over multiple timesteps and noting $\hY_{t_0}^f- \hY_{t_0}^c=0 $ gives
\begin{eqnarray*}
\e^{\gamma t_{2n}}\|\hY_{t_{2n}}^f- \hY_{t_{2n}}^c\|^2 &\lesssim &  (\gamma^{-1}\!+\!4)K^2 \sum_{k=0}^{n-1} \e^{\gamma t_{2k}}\|\hY_{t_{2k+1}}^f- \hY_{t_{2k}}^f\|^2 h.
\end{eqnarray*}
Then raising both sides to the power $p/2,$ taking supremum over $n\in[0,N/2],$ taking expectation and by Jensen's inequality, we obtain
\begin{eqnarray*}
\EE\left[ \sup_{0\leq n \leq N/2}\!\e^{\gamma p t_{2n}/2}\|\hY_{t_{2n}}^f\!-\! \hY_{t_{2n}}^c\|^p \right] \!\!\!\!&\lesssim&\!\!\!\!  (\gamma^{-1}\!+\!4)^{p/2} K^p\, \EE\left[ \left| \sum_{k=0}^{N/2-1} \e^{\gamma t_{2k} }\|\hY_{t_{2k+1}}^f\!-\! \hY_{t_{2k}}^f\|^2 h \right|^{p/2}
\right]\\
&&\!\!\!\!\!\!\!\!\!\!\!\!\!\!\!\!\!\!\!\!\!\!\!\!\!\!\!\!\!\!\!\!\!\!\!\!\!\!\!\!\!\!\!\!\!\!\!\!\!\!\!\!\!\!
 \leq (\gamma^{-1}\!+\!4)^{p/2} K^p \left| \sum_{k=0}^{N/2-1}\! \e^{\gamma t_{2k} } h\right|^{p/2-1} \!\!\!\!\! \EE\left[  \sum_{k=0}^{N/2-1}\! \e^{\gamma t_{2k} }\|\hY_{t_{2k+1}}^f- \hY_{t_{2k}}^f\|^p h 
\right]\\
&&\!\!\!\!\!\!\!\!\!\!\!\!\!\!\!\!\!\!\!\!\!\!\!\!\!\!\!\!\!\!\!\!\!\!\!\!\!\!\!\!\!\!\!\!\!\!\!\!\!\!\!\!\!\! \leq    (\gamma^{-1}\!+\!4)^{p/2} K^p \left| \sum_{k=0}^{N/2-1}\!\! \e^{\gamma t_{2k} } h\right|^{p/2-1} \!\! \sum_{k=0}^{N/2-1} \e^{\gamma t_{2k} }\EE\left[\|\hY_{t_{2k+1}}^f- \hY_{t_{2k}}^f\|^p \right]\!h .
\end{eqnarray*}
By the update on fine path, Lipschitz condition (\ref{Lipschitz condition}), Theorem \ref{Theorem: stability of path} and Jensen's inequality, there exists a constant $C_1>0$ such that
\begin{eqnarray}
\label{small increment}
\EE\left[\|\hY_{t_{2k+1}}^f- \hY_{t_{2k}}^f\|^p \right] &=& \EE\left[\|f(\hY_{t_{2k}}^f)h+ S(\hY_{t_{2k}}^c\!-\!\hY_{t_{2k}}^f)h+\Delta W_{2k}\|^{p}\right] \nonumber\\
&\leq & 2^{p-1}\EE\left[ \|f(\hY_{t_{2k}}^f)+ S(\hY_{t_{2k}}^c\!-\!\hY_{t_{2k}}^f)\|^p\right]h^p + 2^{p-1}\EE\left[ \|\Delta W_{2k}\|^p\right]\nonumber\\
&\lesssim&  C_1^p p^{p/2} h^{p/2},
\end{eqnarray}
which implies that
\begin{equation}
\label{half order}
\EE\left[ \sup_{0\leq n \leq N/2}\!\!\!\e^{\gamma p t_{2n}/2}\|\hY_{t_{2n}}^f\!-\! \hY_{t_{2n}}^c\|^p \right] \lesssim 2^{-p/2}  (\gamma^{-1}\!+\!4)^{p/2} K^p C_1^p\, p^{p/2} \e^{\gamma p T/2} h^{p/2} .
\end{equation}

\textbf{Second}, we directly multiply by $\e^{2\gamma t_{2n+2}}$ on both sides of \eqref{error:2} and $\e^{4\gamma h}\lesssim 2$ gives
\begin{eqnarray*}
\e^{2\gamma t_{2n+2}}\|\hY_{t_{2n+2}}^f- \hY_{t_{2n+2}}^c\|^2 &\lesssim&  \e^{2\gamma t_{2n}} \|\hY_{t_{2n}}^f- \hY_{t_{2n}}^c\|^2 +4\,\e^{2\gamma t_{2n}}K^2\|\hY_{t_{2n+1}}^f- \hY_{t_{2n}}^f\|^2 h^2\\
&&\!\!\!\!\!\!\!\!\!\!\!\!\!\!\!\!\!\!\!\!\!\!\!\!\!\!\!\!\!+\,2(1-4Sh+2S^2h^2)\,\e^{2\gamma t_{2n+2}} \langle \hY_{t_{2n}}^f- \hY_{t_{2n}}^c,f(\hY_{t_{2n+1}}^f)- f(\hY_{t_{2n}}^f) \rangle h.
\end{eqnarray*}
Summing over multiple timesteps and noting that $\hY_{t_0}^f- \hY_{t_0}^c=0 $ gives
\begin{eqnarray*}
\e^{2\gamma t_{2n}}\|\hY_{t_{2n}}^f- \hY_{t_{2n}}^c\|^2 &\lesssim &  4\sum_{k=0}^{n-1} \e^{2\gamma t_{2k}}K^2\|\hY_{t_{2k+1}}^f- \hY_{t_{2k}}^f\|^2 h^2\\
&&\!\!\!\!\!\!\!\!\!\!\!\!\!\!\!\!\!\!\!\!\!\!\!\!\!\!\!\!\! +\,2(1-4Sh+2S^2h^2) \sum_{k=0}^{n-1} \e^{2\gamma t_{2k+2}}\langle \hY_{t_{2k}}^f- \hY_{t_{2k}}^c,f(\hY_{t_{2k+1}}^f)- f(\hY_{t_{2k}}^f) \rangle h.
\end{eqnarray*}
Then raising both sides to the power $p/2,$ taking supremum over $n\in[0,N/2],$ taking expectation and by Jensen's inequality, we obtain
\begin{eqnarray*}
\EE\left[ \sup_{0\leq n \leq N/2}\e^{\gamma p t_{2n}}\|\hY_{t_{2n}}^f- \hY_{t_{2n}}^c\|^p \right] &\lesssim&  2^{p/2-1}4^{p/2}(I_1+I_2), 
\end{eqnarray*}
where
\begin{eqnarray*}
I_1 &=& \EE\left[\ \left|\sum_{k=0}^{N/2-1} \e^{2\gamma t_{2k}}K^2\|\hY_{t_{2k+1}}^f- \hY_{t_{2k}}^f\|^2 h^2\right|^{p/2}\ \right],
\\
I_2 &=& \EE\left[\sup_{0\leq n\leq N/2-1}\left|  \sum_{k=0}^{n} \e^{2\gamma t_{2k}}\langle \hY_{t_{2k}}^f- \hY_{t_{2k}}^c,f(\hY_{t_{2k+1}}^f)- f(\hY_{t_{2k}}^f) \rangle h \right|^{p/2}\ \right].
\end{eqnarray*}

For $I_1,$ Jensen's inequality and the estimate \eqref{small increment} give
\begin{eqnarray*}
I_1 &\leq&  \left|\sum_{k=0}^{N/2-1} \e^{2\gamma t_{2k}}h\right|^{p/2-1}\sum_{k=0}^{N/2-1} \e^{2\gamma t_{2k}}h K^p\EE\left[\|\hY_{t_{2k+1}}^f- \hY_{t_{2k}}^f\|^p\right]  h^{p/2}  \\
&\lesssim& (2\gamma)^{-p/2} K^p C_1^p\, p^{p/2}\, \e^{\gamma pT} h^p.
\end{eqnarray*}

For $I_2,$ we perform a Taylor expansion and by mean value theorem obtain
\begin{eqnarray*}
\langle \hY_{t_{2k}}^f- \hY_{t_{2k}}^c,f(\hY_{t_{2k+1}}^f)- f(\hY_{t_{2k}}^f) \rangle &=& \langle \hY_{t_{2k}}^f- \hY_{t_{2k}}^c,\nabla f(\hY_{t_{2k}}^f)(\hY_{t_{2k+1}}^f- \hY_{t_{2k}}^f) \rangle + R_k \\
&&\!\!\!\!\!\!\!\!\!\!\!\!\!\!\!\!\!\!\!\!\!\!\!\!\!\!\!\!\!\!\!\!\!\!\!\!\!\!\!\!\!\!\!\!\!\!\!\!\!\!\!\!\!\!\!\!\!\! =\langle \hY_{t_{2k}}^f- \hY_{t_{2k}}^c,\nabla f(\hY_{t_{2k}}^f)f(\hY_{t_{2k}}^f) \rangle h  +\langle \hY_{t_{2k}}^f- \hY_{t_{2k}}^c,\nabla f(\hY_{t_{2k}}^f)\Delta W_{2k} \rangle \\
&& \!\!\!\!\!\! +S\langle \hY_{t_{2k}}^f- \hY_{t_{2k}}^c,\nabla f(\hY_{t_{2k}}^f)(\hY_{t_{2k}}^c-\hY_{t_{2k}}^f) \rangle h +R_k ,
\end{eqnarray*}
where $|R_k| \leq 2K\|\hY_{t_{2k}}^f- \hY_{t_{2k}}^c\|\|\hY_{t_{2k+1}}^f- \hY_{t_{2k}}^f \|^2$
and then by Jensen's inequality, we have
\begin{eqnarray*}
I_2 & = & 4^{p/2-1}(J_1 + J_2 +J_3+J_4),
\end{eqnarray*}
where
\begin{eqnarray*}
J_1&=&\EE\left[\sup_{0\leq n\leq N/2-1}\left|\sum_{k=0}^{n} \e^{2\gamma t_{2k}}\langle \hY_{t_{2k}}^f- \hY_{t_{2k}}^c,\nabla f(\hY_{t_{2k}}^f)f(\hY_{t_{2k}}^f) \rangle h^2 \right|^{p/2}\right],
\\
J_2 &=&\EE\left[\sup_{0\leq n\leq N/2-1}\left|\sum_{k=0}^{n} \e^{2\gamma t_{2k}} S \langle \hY_{t_{2k}}^f- \hY_{t_{2k}}^c,\nabla f(\hY_{t_{2k}}^f)(\hY_{t_{2k}}^c-\hY_{t_{2k}}^f) \rangle h^2 \right|^{p/2}\right],
\\
J_3 &=&\EE\left[\sup_{0\leq n\leq N/2-1}\left|\sum_{k=0}^{n} \e^{2\gamma t_{2k}}\langle \hY_{t_{2k}}^f- \hY_{t_{2k}}^c,\nabla f(\hY_{t_{2k}}^f) \Delta W_{2k} \rangle h \right|^{p/2}\right],\\
J_4 &=&\EE\left[\sup_{0\leq n\leq N/2-1}\left|\sum_{k=0}^{n} \e^{2\gamma t_{2k}}R_k\, h\ \right|^{p/2}\right].
\end{eqnarray*}
For $J_1,$ by Cauchy-Schwarz inequality, Jensen's inequality, Young's inequality, the Lipschitz property of $f$ and $\nabla f$ and Theorem \ref{Theorem: stability of path}, for any $\zeta>0,$ there exists a constant $C_{31}>0$ such that
\vspace{-0.2em}
\begin{eqnarray*}
J_1 \!\!\!\! &\leq & \!\!\!\!  \EE\left[ \left|\sum_{k=0}^{N/2-1} \e^{2\gamma kh}h\right|^{p/2-1}\!\! \sum_{k=0}^{N/2-1} \e^{2\gamma kh}h \e^{\gamma p t_{2k}/2}\| \hY_{t_{2k}}^f\!\!-\!\! \hY_{t_{2k}}^c\|^{p/2}\|\nabla f(\hY_{t_{2k}}^f)f(\hY_{t_{2k}}^f) \|^{p/2}h^{p/2}\right]\\
&&\!\!\!\!\!\!\!\!\!\!\!\!\!\!\leq  \EE\left[\left|\sup_{0\leq n\leq N/2}\!\!\e^{\gamma p t_{2n}/2}\| \hY_{t_{2n}}^f\!-\! \hY_{t_{2n}}^c\|^{p/2}\right|\! \left|\sum_{k=0}^{N/2-1}\!\! \e^{2\gamma kh}h\right|^{p/2-1}\!\!\! \sum_{k=0}^{N/2-1}\!\! \e^{2\gamma kh}h \|\nabla f(\hY_{t_{2k}}^f)f(\hY_{t_{2k}}^f) \|^{p/2}h^{p/2}\right]\\
&&\!\!\!\!\!\!\!\!\!\!\!\!\!\! \leq\frac{1}{4\zeta}  \EE\left[\sup_{0\leq n\leq N/2}\!\!\e^{\gamma p t_{2n}}\| \hY_{t_{2n}}^f\!-\! \hY_{t_{2n}}^c\|^{p}\right] \!+\! \zeta \EE\left[ \left|\sum_{k=0}^{N/2-1}\!\! \e^{2\gamma kh}h\right|^{p-1}\!\sum_{k=0}^{N/2-1}\!\! \e^{2\gamma kh}h \|\nabla f(\hY_{t_{2k}}^f)f(\hY_{t_{2k}}^f) \|^{p}h^{p}\right]\\
&&
\!\!\!\!\!\!\!\!\!\!\!\!\!\!\leq  \frac{1}{4\zeta}  \EE\left[\sup_{0\leq n\leq N/2}\e^{\gamma p t_{2n}}\| \hY_{t_{2n}}^f- \hY_{t_{2n}}^c\|^{p}\right]\! +\! \zeta \,C_{31}^p\, p^{p}\, \e^{\gamma pT}\, h^p.
\end{eqnarray*}
Similarly, for $J_2,$ there exists a constant $C_{32}>0$ such that, for any $\zeta>0,$
\[
J_2 \leq  \frac{1}{4\zeta}  \EE\left[\sup_{0\leq n\leq N/2}\e^{\gamma p t_{2n}}\| \hY_{t_{2n}}^f- \hY_{t_{2n}}^c\|^{p}\right] + \zeta \,C_{32}^p\, p^{p}\, \e^{\gamma pT}\, h^p.
\]
Then, for $J_3,$ by the Burkholder-Davis-Gundy inequality in \cite{BY82}, the Lipschitz property of $\nabla f$ and Theorem \ref{Theorem: stability of path}, there exists a constant $C_{33}>0$ such that, for any $\zeta>0,$
\begin{eqnarray*}
J_3 &\leq & (C_{\scaleto{\mathrm{BDG}}{4pt}}\,p)^{p/4}\ \EE\left[\left|\sum_{k=0}^{N/2-1} \e^{4\gamma t_{2k}}\| \hY_{t_{2k}}^f- \hY_{t_{2k}}^c\|^2 \|\nabla f(\hY_{t_{2k}}^f)\|^2  h^3 \right|^{p/4}\right]\\
&&\!\!\!\!\!\!\!\!\!\!\!\!\!\!\!\!\!\!\!\!\!\!\!\!\!\!\leq (C_{\scaleto{\mathrm{BDG}}{4pt}}\, p)^{p/4}\EE\left[\left|\sup_{0\leq n\leq N/2}\!\! \e^{\frac{\gamma p t_{2n}}{2}}\| \hY_{t_{2n}}^f\!-\! \hY_{t_{2n}}^c\|^{\frac{p}{2}}\right|  \left|\sum_{k=0}^{N/2-1} \e^{2\gamma t_{2k}}h\right|^{\frac{p}{4}-1}\!\! \sum_{k=0}^{N/2-1}\!\! \e^{2\gamma t_{2k}} h\|\nabla f(\hY_{t_{2k}}^f)\|^{\frac{p}{2}}  h^{\frac{p}{2}} \right]\\
&\leq & \frac{1}{4\zeta}\  \EE\left[\sup_{0\leq n\leq N/2}\e^{\gamma p t_{2n}}\| \hY_{t_{2n}}^f- \hY_{t_{2n}}^c\|^{p}\right] + \zeta\,C_{33}^p\, p^{p}\,\e^{\gamma pT}\, h^p.
\end{eqnarray*}
Similarly, for $J_4,$ by Jensen's inequality, Young inequality, Lipschitz property of $f$ and $\nabla f$ and Theorem \ref{Theorem: stability of path}, for any $\zeta>0,$ there exists a constant $C_{34}>0$ such that, for any $\zeta>0,$
\begin{eqnarray*}
J_4 &\leq& \EE\left[\left|\sum_{k=0}^{N/2-1} \e^{2\gamma t_{2k}}h\right|^{p/2-1} \sum_{k=0}^{N/2-1} \e^{2\gamma t_{2k}}h  \e^{\gamma p t_{2k}/2}\| \hY_{t_{2k}}^f\!\!-\!\! \hY_{t_{2k}}^c\|^{p/2} K^{p/2} \|(\hY_{t_{2k+1}}^f\!\!-\!\! \hY_{t_{2k}}^f)\|^{p}\right]\\
&&\!\!\!\!\!\!\!\!\!\!\!\!\!\!\!\!\!\!\!\!\!\!\!\!\leq  \EE\left[\left|\sup_{0\leq n\leq N/2}\!\! \e^{\gamma p t_{2n}/2}\| \hY_{t_{2n}}^f\!-\! \hY_{t_{2n}}^c\|^{p/2}\right| \left|\sum_{k=0}^{N/2-1} \e^{2\gamma t_{2k}}h\right|^{p/2-1} 
\sum_{k=0}^{N/2-1} \e^{2\gamma t_{2k}}h K^{p/2} \|(\hY_{t_{2k+1}}^f- \hY_{t_{2k}}^f)\|^{p}\right]\\
&&\!\!\!\!\!\!\!\!\!\!\!\!\leq  \frac{1}{4\zeta}\  \EE\left[\sup_{0\leq n\leq N/2}\e^{\gamma p t_{2n}}\| \hY_{t_{2n}}^f- \hY_{t_{2n}}^c\|^{p}\right]\\
&&\!\!\!\!\!\!\!\!\!\!\!\!\!\!\!\!\!\!\!\!\!\!\!\!+ \zeta\, \EE\left[ \left|\sum_{k=0}^{N/2-1} \e^{2\gamma t_{2k}}h\right|^{p-1}\sum_{k=0}^{N/2-1} \e^{2\gamma t_{2k}}h            
K^{p}2^{2p-1} \left(\|f(\hY_{t_{2k}}^f)\!+\! S(\hY_{t_{2k}}^c- \hY_{t_{2k}}^f)\|^{2p}h^{2p}\!+\!\|\Delta W_{2k}\|^{2p}\right)
\right]\\
&&\!\!\!\!\!\!\!\!\!\!\!\!\lesssim  \frac{1}{4\zeta}  \EE\left[\sup_{0\leq n\leq N/2}\e^{\gamma p t_{2n}}\| \hY_{t_{2n}}^f- \hY_{t_{2n}}^c\|^{p}\right] +\zeta \, C_{34}^p\,p^p \e^{\gamma pT}\,  h^p .
\end{eqnarray*}
Finally, by choosing $\zeta=2^{5p/2-2},$ there exists a constant $C_{4}>0$ such that 
\begin{equation*}
\EE\left[\sup_{0\leq n\leq N/2}\e^{\gamma p t_{2n}}\| \hY_{t_{2n}}^f- \hY_{t_{2n}}^c\|^{p}\right] \lesssim C_{4}^p\, p^{p}\, \e^{\gamma pT} h^p,
\end{equation*}
which together with \eqref{half order} implies that there exists a constant $C_5>0$ such that
\begin{equation*}
\sup_{0\leq n\leq N/2}\EE\left[\| \hY_{t_{2n}}^f- \hY_{t_{2n}}^c\|^{p}\right] \lesssim C_{5}^p \, \min\left( p^{p/2}\,  h^{p/2},\ p^{p}\,  h^p \right).
\end{equation*}
For the odd time points, Lipschitz condition \eqref{Lipschitz condition} and  \eqref{eq:one_sided_Lipschitz3} gives
\begin{eqnarray}
\label{error:1}
\|\hY_{t_{2n+1}}^f\!-\!\hY_{t_{2n+1}}^c\|^2 \!\!\!\!&\leq &\!\!\!\! \left( (1-2Sh)^2 + 2h\lambda (1-2Sh) + K^2 h^2 \right) \|\hY_{t_{2n}}^f\!-\! \hY_{t_{2n}}^c\|^2 \nonumber\\
&\lesssim &\!\!\!\! 2\,\|\hY_{t_{2n}}^f- \hY_{t_{2n}}^c\|^2,
\end{eqnarray}
which, by raising both sides to power $p/2$ and taking expectation, there exist constants $C_{(1)},\ C_{(2)}>0$ such that, $\forall\ 0<h<C_{(1)},$
\[
\sup_{0\leq n\leq N}\EE\left[\| \hY_{t_{n}}^f- \hY_{t_{n}}^c\|^{p}\right] \leq C_{(2)}^p\, \min\left( p^{p/2}\,  h^{p/2},\ p^{p}\,  h^p \right).
\]
\end{proof}

\subsection{Theorem \ref{lemma 1}}
\label{proof 4}
\begin{proof}
For simplicity, we only show the proof for $\frac{\D \widehat{\mathbb{Q}}^c}{\D \mathbb{P}}$, and the result for $\frac{\D \mathbb{Q}^f}{\D \mathbb{P}}$ follows similarly. 

Now we write down the detail of the exact Radon-Nikodym derivative given in \eqref{eq: RD}. 
\begin{eqnarray*}
\EE\left[\left| \frac{\D \widehat{\mathbb{Q}}^c}{\D \mathbb{P}} \right|^p \right] \!\!\!\!\!\!&=&\!\!\!\!\!\!\EE\left[ \exp\left(\!\! -pS\!\!\sum_{n=0}^{N/2-1}\!\!\left\langle \hY_{t_{2n}}^f\!-\!\hY_{t_{2n}}^c,\ \Delta W_{2n}\!+\! \Delta W_{2n+1}\right\rangle \!-\!\frac{p}{2}S^2\!\! \sum_{n=0}^{N/2-1}\left\|\hY_{t_{2n}}^f\!-\!\hY_{t_{2n}}^c\right\|^2\!\! 2h\right)\!\!\right]\\
 &=&\EE\left[ \exp\left(\frac{p(2p-1)}{2}S^2 \sum_{n=0}^{N/2-1}\left\|\hY_{t_{2n}}^f-\hY_{t_{2n}}^c\right\|^2 2h\right)\right.\\
&&\!\!\!\!\!\!\!\!\!\!\!\!\!\!\!\!\!\!\!\!\!\!\!\!\!\!\!\!\!\!\!\!\!\!\!\!\!\!\!\!\!\!\!\!\!\!\!\left.\times\exp\left(-pS\sum_{n=0}^{N/2-1}\left\langle \hY_{t_{2n}}^f\!-\!\hY_{t_{2n}}^c,\ \Delta W_{2n}\!+\! \Delta W_{2n+1}\right\rangle-p^2
S^2 \sum_{n=0}^{N/2-1}\left\|\hY_{t_{2n}}^f-\hY_{t_{2n}}^c\right\|^2 2h \right)\right],
\end{eqnarray*}
and then the H{\"o}lder's inequality gives
\[\EE\left[\left| \frac{\D \widehat{\mathbb{Q}}^c}{\D \mathbb{P}} \right|^p \right]\ \leq\ I_1^{1/2}\,I_2^{1/2},\]
where
\begin{eqnarray*}
I_1 \!\!\!\!\!\!& = &\!\!\!\!\!\!\EE\left[\exp\left(p(2p-1)S^2 \sum_{n=0}^{N/2-1}\left\|\hY_{t_{2n}}^f-\hY_{t_{2n}}^c\right\|^2 2h\right)\right],\\
I_2 \!\!\!\!\!\!& = &\!\!\!\!\!\! \EE\left[ \exp\left(-2pS\!\!\sum_{n=0}^{N/2-1}\!\!\left\langle \hY_{t_{2n}n}^f\!-\!\hY_{t_{2n}}^c,\Delta W_{2n}\!+\! \Delta W_{2n+1}\right\rangle\!-\!2p^2
S^2\! \sum_{n=0}^{N/2-1}\!\left\|\hY_{t_{2n}}^f\!-\!\hY_{t_{2n}}^c\right\|^2 \! 2h \right)\right]\!\!\leq \!\!1,\\
\end{eqnarray*} 
since the exponential term in $I_2$ is a super-martingale.

For $I_1,$ by Fatou's lemma and Jensen's inequality, we obtain
\begin{eqnarray*}
I_1\
&\leq& \
\sum_{k=0}^\infty \frac{\EE\left[\left| 2(pS)^2 \sum_{n=0}^{N/2-1} \|\hY_{t_{2n}}^f-\hY_{t_{2n}}^c \|^2 2h\right|^k\right]}{k!}\\
&\leq&\ \sum_{k=0}^\infty \frac{2^k(pS)^{2k} T^{k-1} \sum_{n=0}^{N/2-1}\EE\left[\left\|\hY_{t_{2n}}^f-\hY_{t_{2n}}^c\right\|^{2k}\right] 2h }{k!}, 
\end{eqnarray*}
then by Theorem \ref{thm: strong error} and the Stirling's approximation $k!\geq \sqrt{2\pi}k^{k+1/2}\e^{-k}$ for any $k\geq1$, there exist constants $C_1,\ C_2 >0$ such that 
\begin{eqnarray*}
I_1\
&\lesssim &\  1 + \sum_{k=1}^\infty \frac{(2pSC_2)^{2k} (T h/\e)^k}{\sqrt{2\pi k}} <2.
\label{RNnorms}
\end{eqnarray*}
provided $(2pSC_2)^2 Th/\e<1/2.$ 

Therefore, for all $T>0$ and $p\geq 1,$ there exist constants $C_{(1)},\ C_{(2)}>0$ such that, for all  $0<h<\min(C_{(1)},C_{(2)}/(Tp^2)),$
\[\EE\left[\left| \frac{\D \widehat{\mathbb{Q}}^c}{\D \mathbb{P}} \right|^p \right]\  \leq 2.\]
\end{proof}

\subsection{Theorem \ref{thm: level variance}}
\label{proof 3}
\begin{proof}
By Jensen's and H{\"o}lder's inequalities, we split the expectation into three parts:
\begin{eqnarray*}
&\!\!\!\!\!\!\!\!\!\!\!\!\!\!\!\!\!\!\!\!\!\!\!\!\!\!\!\!\!\!\!\!\!\!\!\!\!\!\!\!\!\!\!\!\!\!\!\!\!\!\!\!\!\!\!\!\!\!\!\!\!\!\!\!\!\!\!\!\!\!\!\!\!\!\!\!\!\!\!\!\!\!\!\!\!\!\!\!\!\!\!\!\!\!\!\!\!\!\!\!\!\!\!\!\!\!\!\!\!\!\!\!\!\!\!\!\!\!\!\!\!\!\!\!\!\!\!\!\!\!\!\!\!\!\!\!\!\!\!\!\EE\left[\left| \varphi(\hY_T^f)\frac{\D  \widehat{\mathbb{Q}}^f}{\D \mathbb{P}}-\varphi(\hY_T^c)\frac{\D  \widehat{\mathbb{Q}}^c}{\D \mathbb{P}}\right|^{p}\right] \\
& =\EE\left[\left| \varphi(\hY_T^f)\frac{\D  \widehat{\mathbb{Q}}^f}{\D \mathbb{P}}-\varphi(\hY_T^f) +\varphi( \hY_T^f) - \varphi(\hY_T^c) + \varphi(\hY_T^c) - \varphi(\hY_T^c)\frac{\D  \widehat{\mathbb{Q}}^c}{\D \mathbb{P}}\right|^{p}\right]
\\
& \ \ \ \ \leq 3^{p-1}\EE\left[\left| \varphi(\hY_T^f)-\varphi(\hY_T^c)\right|^{p}\right] +3^{p-1}\EE\left[\left|\varphi( \hY_T^f) \right|^{2p}\right]^{1/2}\EE\left[\left|1-\frac{\D  \widehat{\mathbb{Q}}^f}{\D \mathbb{P}}\right|^{2p}\right]^{1/2}\\
&\!\!\!\!\!\!\!\!\!\!\!\!\!\!\!\!\!\!\!\!\!\!\!\!\!\!\!\!\!\!\!\!\!\!\!\!\!\!\!\!\!\!\!\!\!\!\!\!\!\!\!\!\!\!\!\!+3^{p-1}\EE\left[\left| \varphi(\hY_T^c) \right|^{2p}\right]^{1/2}\EE\left[\left|1-\frac{\D  \widehat{\mathbb{Q}}^c}{\D \mathbb{P}}\right|^{2p}\right]^{1/2}.
\end{eqnarray*}
By Theorems \ref{Theorem: stability of path} and \ref{thm: strong error} and Lipschitz condition, there exists a constant $C_1>0$ such that 
\begin{eqnarray*}
&\EE\left[\left| \varphi(\hY_T^f)-\varphi(\hY_T^c)\right|^{p}\right] \leq K^p\,\EE \left[\left\| \hY_T^f-\hY_T^c\right\|^{p}\right] \lesssim  C_1^p\, p^{p}\, h^{p},\\
&\!\!\!\EE\left[\left| \varphi(\hY_T^f) \right|^{2p}\right] \lesssim C_1^p\, p^p,\ \ \ \ \ \ \ \
\EE\left[\left| \varphi(\hY_T^c) \right|^{2p}\right] \lesssim C_1^p\, p^p.
\end{eqnarray*}
Next we estimate 
$
\EE\left[\left|1-\frac{\D \widehat{\mathbb{Q}}^c}{\D \mathbb{P}}\right|^{p}\right]$
with
$
\frac{\D \mathbb{Q}^c}{\D \mathbb{P}} = \exp(\mathcal{H})
$
where
\[
\mathcal{H}\triangleq-S\!\sum_{n=0}^{N/2-1}\!\left\langle \hY_{2n}^f\!-\!\hY_{2n}^c,\ \Delta W_{2n}\!+\! \Delta W_{2n+1}\right\rangle-\frac{S^2}{2}\sum_{n=0}^{N/2-1}\left\|\hY_{2n}^f-\hY_{2n}^c\right\|^2 2h.
\]
Taylor expansion gives
\[\e^x=1+\e^{\xi(x)}x,\ \text{for some}\ \xi(x)\ \text{with}\ |\xi(x)|<|x|,\]
which, combined with H{\"o}lder's inequality, implies that
\begin{equation*}
\EE\left[\left|1-\frac{\D \widehat{\mathbb{Q}}^c}{\D \mathbb{P}}\right|^{2p}\right] = \EE\left[\left( \exp(\xi(\mathcal{H}))\,|\mathcal{H}|\right)^{2p}\right]\leq \EE\left[\left| \exp(\xi(\mathcal{H}))\right|^{4p}\right]^{1/2}\EE\left[\left|\mathcal{H}\right|^{4p}\right]^{1/2}.
\end{equation*}
First, by Theorem \ref{lemma 1}, we obtain
\begin{eqnarray*}
\EE\left[\left| \exp(\xi(\mathcal{H}))\right|^{4p}\right]  \leq \EE\left[ \max( \exp(4p\mathcal{H}),1)\right]
\leq  \EE\left[  \exp(4p\mathcal{H})\right] +1 \lesssim 3,
\end{eqnarray*}
provided $h<C_{2}/(T\, p^2)$ for some constant $C_2>0.$

Second, by Jensen's inequality and the Burkholder-Davis-Gundy inequality in \cite{BY82}, there exists a constant $C_3>0$ such that
\begin{eqnarray*}
\EE\left[\left| \mathcal{H}\right|^{4p}\right] 
&\leq &2^{4p-1}S^{4p}\ \EE\left[
\left| \sum_{n=0}^{N/2-1}\left\langle \hY_{2n}^f\!-\!\hY_{2n}^c,\ \Delta W_{2n}\!+\! \Delta W_{2n+1}\right\rangle\right|^{4p}\right]\\
&& +  2^{4p-1}S^{8p}\ \EE\left[\left| \sum_{n=0}^{N/2-1}\left\|\hY_{2n}^f-\hY_{2n}^c\right\|^2 h\right|^{4p}
\right]\\
&& \!\!\!\!\!\!\!\!\!\!\!\!\!\!\!\!\!\!\!\!\!\!\!\!\!\!\!\!\!\!\!\!\!\!\!\!\!\!\!\!\!\!\!\!\!\!\!\!\!\leq2^{4p-1}S^{4p} C_3^p p^{2p}\EE\left[\left| \sum_{n=0}^{N/2-1}\left\|\hY_{2n}^f\!-\!\hY_{2n}^c\right\|^2\! 2h \right|^{2p}\right] \!+\!  2^{4p-1}S^{8p} T^{4p-1}  \EE\left[ \sum_{n=0}^{N/2-1}\left\|\hY_{2n}^f\!-\!\hY_{2n}^c\right\|^{8p}\! h
\right] \\
&& \!\!\!\!\!\!\!\!\!\!\!\!\!\!\!\!\!\!\!\!\!\!\!\!\!\!\!\!\!\!\!\!\!\!\!\!\!\!\!\!\!\!\!\!\!\!\!\!\!\leq 2^{6p-1}S^{4p} C_3^pp^{2p}T^{2p-1} \EE\left[ \sum_{n=0}^{N/2-1}\left\|\hY_{2n}^f\!-\!\hY_{2n}^c\right\|^{4p}\!h\right]\! +\! 2^{4p-1}S^{8p}T^{4p-1} \EE\left[ \sum_{n=0}^{N/2-1}\left\|\hY_{2n}^f\!-\!\hY_{2n}^c\right\|^{8p}\! h
\right].
\end{eqnarray*}
Then, provided $h<C_{4}/\sqrt{T p}$ for some constant $C_4>0,$ by Theorem \ref{thm: strong error}, there exists a constant $C_5>0$ such that
\begin{equation*}
\EE\left[\left| \mathcal{H}\right|^{4p}\right] \lesssim  C_5^{4p}\,p^{6p}\,T^{2p}\,h^{4p},
\end{equation*}
and then
\[\EE\left[\left|1-\frac{\D \widehat{\mathbb{Q}}^c}{\D \mathbb{P}}\right|^{2p}\right]\lesssim  3^{1/2}\, C_5^{2p}\,p^{3p}\,T^{p}\,h^{2p}. \]
Similarly, we can get the same result for $\EE\left[\left|1-\frac{\D \widehat{\mathbb{Q}}^f}{\D \mathbb{P}}\right|^{2p}\right].$

Since $1/\sqrt{Tp}\geq 1/(\sqrt{T}p)\geq \min(1,1/(Tp^2)),$ the condition $h<C_4/\sqrt{Tp}$ can be replaced by $h\leq C_4\min(1,1/(Tp^2)).$ Overall, combining all the estimates above, there exist constants $C_{(1)},$ $C_{(2)},$ $C_{(3)}>0$ such that, for any $0< h< \min(C_{(1)},C_{(2)}/(Tp^2)),$
\[
\EE\left[\left| \varphi(\hY_T^f)\frac{\D \widehat{\mathbb{Q}}^f}{\D \mathbb{P}}-\varphi(\hY_T^c)\frac{\D \widehat{\mathbb{Q}}^c}{\D \mathbb{P}}\right|^{p}\right] \leq \,C_{(3)}^p\, p^{2p}\, T^{p/2}\, h^p.
\]

\end{proof}

\section{Conclusions and future work}
In this paper, we introduced a change of measure technique for multilevel Monte Carlo estimators. For chaotic ergodic SDEs satisfying the one-sided Lipschitz condition, we reduce the exponential increase of the variance to a linear increase, which greatly reduces the computational cost when our interest is in the expectation with respect to the invariant measure. The numerical results support our analysis.

One direction for extension of the theory is to perform the numerical analysis for ergodic SDEs with a non-globally Lipschitz drift using adaptive timesteps and the change of measure technique, since numerical experiments in section \ref{Non-Lipschitz section} show it works well in these cases. Another possible direction is to follow the idea in \cite{GB17} to estimate the mean exit time and related path functionals which correspond to the solution of an elliptic PDEs. In this case, we need to estimate the mean exit time in the infinite time interval and the associated path functionals, which again is a long-time simulation problem.

\section*{References}

\bibliography{citation}

%\newpage
%\begin{appendix}
%\section{Useful Lemma}
%\begin{lemma}
%\label{lemma:useful}
%If $f$ satisfies Assumption \ref{assp:enhanced_one_sided_Lipschitz},
%then for any $x,y,v \in \RR^m$,
%\[
%\langle v, f(x)\!-\!f(y) \rangle = 
%\langle v, \nabla f(x) (x\!-\!y) \rangle + R(x,y,v),
%\]
%where the remainder term has the bound
%[
%|R(x,y,v)| \leq L \|v\| \, \|x\!-\!y\|^2.
%\]
%\end{lemma}%
%\begin{proof}
%If we define the scalar function $u(\lambda)$ for $0\!\leq\!\lambda\!\leq\!1$ by
%\[
%u(\lambda) = \langle v, f(y+\lambda(x\!-\!y)) \rangle,
%\]
%then $u(\lambda)$ is continuously differentiable, and by the Mean Value Theorem
%$u(1)\!-\!u(0)= u'(\lambda^*)$ for some $0\!<\!\lambda^*\!<\!1$, which implies 
%that
%\[
%\langle v, f(x)\!-\!f(y) \rangle = \langle v, \nabla f(y+\lambda^*(x\!-\!y))\, (x\!-\!y) \rangle.
%\]
%The final result then follows from the Lipschitz property of $\nabla f$.
%\end{proof}

%\end{appendix}

\end{document}